\newtheorem{teo}{Theorem}[section]
\newtheorem{prop}{Proposition}[section]
\newtheorem{corol}{Corollary}[section]
\newtheorem{lemm}{Lemma}[section]
\newtheorem{remark}{Remark}[section]
\newtheorem{ex}{Example}[section]
\newcommand{\beq}{\begin{equation}}
\newcommand{\eeq}{\end{equation}}
\newcommand{\bqn}{\begin{eqnarray}}
\newcommand{\eqn}{\end{eqnarray}}
\newcommand{\bqne}{\begin{eqnarray*}}
\newcommand{\eqne}{\end{eqnarray*}}
\newcommand{\C}{{\mathbb C}}
\title[ Astheno-K\"ahler and balanced structures]{Astheno-K\"ahler and balanced structures on  fibrations}
\begin{document}

\author{Anna Fino, Gueo Grantcharov  and Luigi Vezzoni}
\date{\today}
\subjclass[2000]{Primary 53C15 ; Secondary 53C55, 53C30}
\keywords{astheno-K\"ahler, balanced, complex homogeneous space}
\address{Dipartimento di Matematica \lq\lq Giuseppe Peano\rq\rq \\ Universit\`a di Torino\\
Via Carlo Alberto 10\\
10123 Torino\\ Italy}
 \email{annamaria.fino@unito.it, luigi.vezzoni@unito.it}
 \address{ Department of Mathematics and Statistics Florida International University\\
  Miami Florida, 33199, USA}
\email{grantchg@fiu.edu}

\thanks{The work of the first and third authors was supported by the project FIRB ``Geometria differenziale e teoria geometrica delle funzioni'' and by G.N.S.A.G.A. of I.N.d.A.M. The work of the second author is supported by the Simons Foundation grant \#246184}

\maketitle

\begin{abstract} We study the existence of  three classes of Hermitian  metrics on certain types of compact complex manifolds. More precisely, we consider  balanced, SKT and astheno-K\"ahler metrics. We prove that the twistor spaces of compact hyperk\"ahler and negative quaternionic-K\"ahler manifolds do not admit astheno-K\"ahler metrics. Then we provide a construction of astheno-K\"ahler structures on torus bundles over K\"ahler manifolds leading to new examples. In particular, we find  examples of   compact  complex non-K\"ahler manifolds which admit a balanced and an astheno-K\"ahler metrics, thus answering to a question in \cite{STW} (see also \cite{F}). One of these examples is simply connected.
We also show that the Lie groups  $SU(3)$ and $G_2$ admit SKT and astheno-K\"ahler metrics, which are different. Furthermore,
  we investigate  the existence of balanced metrics on compact complex  homogeneous spaces  with an invariant volume form, showing in particular  that if   a compact complex homogeneous space  $M$ with invariant volume admits a balanced metric, then  its first Chern class $c_1(M)$ does not vanish. Finally we characterize Wang C-spaces admitting SKT metrics.

\end{abstract}

%\tableofcontents

\section{Introduction} After it became clear that certain complex manifolds do not admit K\"ahler metrics, the question of finding appropriate generalizations naturally arose. Although a universal type of Hermitian non-K\"ahler metrics has not been found yet, several classes, related to different geometric or physics applications, have been introduced and studied.
The present paper focuses on the existence and interplay between three such classes: astheno-K\"ahler, SKT and balanced metrics
on particular examples of compact complex non-K\"ahler manifolds.

A Hermitian  metric $g$ on a complex manifold $(M,I)$ is called  {\em astheno-K\"ahler} if its fundamental form $F(\cdot,\cdot):=g(I\cdot,\cdot)$ satisfies
$$
dd^cF^{n-2} = 0\,,
$$
where $n$ is the complex dimension of $M$, $d^c=I^{-1}dI$  and $I$ is naturally extended on differential forms. Such metrics were used by Jost and Yau in  \cite{JY} to establish existence of Hermitian harmonic maps which led to some information about the fundamental group of the targets. As application, Carlson and Toledo made use of astheno-K\"ahler metrics  to get restrictions on  the fundamental  group of complex surfaces of class VII \cite{CT}.  Later harmonic maps from more general Hermitian manifolds  have been studied in \cite{LY}.  In \cite{LYZ} Li, Yau, and Zheng conjectured  that compact
non-K\"ahler Hermitian-flat manifolds or similarity Hopf manifolds of
complex dimension   $\geq 3$  do not admit any astheno-K\"ahler metric. Moreover,  Tosatti and  Weinkove proved in \cite{TW}
 Calabi-Yau theorems for Gauduchon and strongly Gauduchon  metrics on the  class of  compact astheno-K\"ahler manifolds.  Until recently  astheno-K\"ahler  metrics were not receiving a lot of attention due to the lack of examples. Indeed, there are not many examples of astheno-K\"ahler manifolds, some of them are given by Calabi-Eckmann manifolds  \cite{Matsuo} and by nilmanifolds   \cite{FT}.  In the present paper we provide a construction which leads to many new ones.

We further recall that a Hermitian metric $g$ on  a complex manifold $(M,I)$ is called  {\em strong K\"ahler with torsion} (SKT)  or {\em pluriclosed},  if its fundamental form $F$ satisfies $dd^cF = 0$, while it is called {\em balanced} if $F$ is co-closed. Moreover, $g$ is called standard (or Gauduchon) if $d d^c F^{n -1} =0$. For $n = 3$ the notions of astheno-K\"ahler and SKT metric coincide, while in higher dimensions they lie in different classes.  The study of SKT metrics was initiated by Bismut in \cite{Bismut} and then it was pursued in many papers (see e.g. \cite{GHR, Strominger, FPS,Cavalcanti} and the references therein).
For  a complex surface the notions of SKT metric and standard metric coincide and in view of \cite{Gauduchon} every  compact complex surface has an SKT metric. In higher dimensions things are different and there are known examples of complex manifolds not admitting SKT metrics.  For instance, Verbitsky showed in  \cite{V}   that the twistor space $M$ of a compact, anti-selfdual Riemannian manifold admits an SKT metric if and only it is K\"ahler (hence if and only if it is isomorphic to $\mathbb{C P}^3$ or to a flag space). This result is obtained by using rational connectedness of twistor spaces, proved by Campana in  \cite{Campana}.

Examples of compact SKT manifolds are provided by principal torus bundles over compact K\"ahler manifolds \cite{GGP}, by nilmanifolds \cite{FPS, Ugarte, EFV}, and other examples can be constructed by using twist construction \cite{Swann} or blow-ups \cite{FT2}.

Balanced geometry is probably the most studied, partly due to its relation with string theory and Strominger's system. The terminology was introduced by  Michelsohn in  \cite{Michelson},
 where balanced metrics were first studied in depth. In particular, Michelsohn showed an obstruction to the existence of balanced metrics by using currents. From Michelsohn's obstruction it follows that Calabi-Eckmann manifolds have no balanced metrics. Balanced metrics are stable under modifications, but not under deformations. Basic examples of balanced manifolds are given by twistor
spaces. First it was known that twistor spaces of selfdual 4-manifolds are balanced \cite{Michelson} and later the same was proven for twistor spaces of a hyperk\"ahler (see \cite{KV}) and quaternionic-K\"ahler  manifolds (\cite{Pontecorvo,AGI}) and, most recently, by the twistor spaces   of a compact hypercomplex manifold \cite{Tomberg}. Taubes in \cite{Taubes} used the twistor space examples to show that every finitely generated group is a fundamental group of a balanced manifold.

It is known that a Hermitian metric on a compact manifold is simultaneously balanced and SKT if and only if it is K\"ahler (see e.g. \cite{AI}). Furthermore, it was proved that many classes of examples of balanced manifolds do not admit any SKT metric  \cite{chiose,chiose2,FV2,FLY,V,FV}.

This led Fino and Vezzoni to ask in \cite{FV2} whether there are compact complex manifolds admitting both SKT and balanced metrics but which admit no K\"ahler structure.
In the same spirit, one can wonder if a compact complex non-K\"ahler manifold could admit both an astheno-K\"ahler and a balanced metric.  This second problem arises from \cite{STW}, where it is proved that the Calabi-Yau type equation introduced in \cite{FWW} is solvable on astheno-K\"ahler manifolds admitting balanced metrics. There was an opinion, formulated as part of a folklore conjecture in \cite{F}, that as in the SKT case, a balanced compact complex manifold can not admit an astheno-K\"ahler, unless it is K\"ahler. Note that, as in the SKT case, a metric on a compact manifold cannot be both balanced and astheno-K\"ahler simultaneously, unless K\"ahler.

In the present paper (Section 4), using the general construction of Section 3,  we construct an example of an $8$-dimensional  non-K\"ahler  principal torus bundle over a torus admitting both an astheno-K\"ahler metric and a balanced metric. It is also a 2-step nilmanifold with holomorphically trivial canonical bundle.  Examples of  nilmanifolds in every dimension $2n\geq 8$ have been constructed independently by Latorre and Ugarte \cite{U}. In Section 5 we also find a simply connected compact example of dimension 22 with non-vanishing first Chern class. It is the complex homogeneous space $SU(5)/T^2$ with a complex structure studied by H. C. Wang \cite{Wa}.

 In Section 2   we show that twistor spaces over compact hyperk\"ahler manifolds and twistor spaces over compact quaternion-K\"ahler manifolds cannot admit  astheno-K\"ahler metrics. We prove  this result  by providing a new obstruction, which generalizes the one in \cite{JY},  to the existence of astheno-K\"ahler metrics by using currents in the spirit of \cite{Michelson}. Furthermore,  in Section 3  we provide a construction of  astheno-K\"ahler metrics on some principal torus bundles over  K\"ahler manifolds generalizing the result of Matsuo  for Calabi-Eckmann manifolds \cite{Matsuo}. The construction could be used to find many new examples. Apart from Section 4, we use it to find astheno-K\"ahler metrics  on  the Lie groups $SU(3)$ and $G_2$. The result gives  examples of  compact complex non-K\"ahler manifolds  admitting both an astheno-K\"ahler and an SKT metric. In  the last part of the
paper  we investigate  the existence of balanced metrics on compact complex manifolds with an invariant volume form. Compact complex homogeneous spaces with invariant volumes have been classified in \cite{Guan}, showing that every compact complex homogeneous space with an invariant volume form is a principal homogeneous complex torus bundle over the product of a projective rational homogeneous space and a complex parallelizable manifold. We obtain a characterisation of the balanced condition in terms of the   characteristic classes  of the associated torus fibration
and of the K\"ahler cone of the projective rational homogeneous space. As a consequence we show that  if   a compact complex homogeneous space  $M$ with invariant volume admits a balanced metric, then  its first Chern class is non-vanishing.
%We also show that the space $SU(5)/T^2$ with the complex structure admitting both balanced
%and astheno-K\"ahler metrics, doesn't admit (any) SKT one.
%To this end we use an obstruction in \cite{Cavalcanti} which is probably applicable to many other similar examples.

Finally, in the last section we study the existence of SKT metrics on Wang C-spaces, i.e. on compact complex non-K\"ahler manifolds with finite fundamental group, admitting a transitive action by a compact Lie group of biholomorphisms. We show that every Wang C-space admitting an SKT metric can be covered with a product of a compact Lie group and a generalized flag manifold. In particular our example $SU(5)/T^2$ cannot admit SKT metrics.

\section{Non-existence of astheno-K\"ahler metrics on twistor spaces}
In this section we show that  twistor spaces of hyperk\"ahler  and quaternionic-K\"ahler manifolds do not admit any astheno-K\"ahler metric.

A {\em hyperk\"ahler manifold} is a Riemannian manifold $(M,g)$ with holonomy contained in $Sp(n)$. The hyperk\"ahler condition can be characterized by  the existence of three  complex structures $I,J,K$ each one inducing a K\"ahler structure with $g$ and satisfying the quaternionic relation $IJ=-JI=K$. As a consequence, for any $p=(a,b,c)\in S^2, a^2+b^2+c^2=1$, the endomorphism $I_p=aI+bJ+cK$ is an integrable almost complex structure. The twistor space $Tw(M)$ of $M$ is then defined as the space $Tw(M)=M\times S^2$ endowed with the tautological complex structure $\mathcal{I}|_{(x,p)} = I_p|_{T_xM}\times I_{S^2}$. Note that there are  two natural projections $\pi_1: Tw(M)\rightarrow M$ and $\pi_2:Tw(M)\rightarrow S^2$ where the second one is holomorphic, but the first one is not.

A Riemannian manifold $(M,g)$ is called {\em quaternionic-K\"ahler} if its holonomy is in $Sp(n)Sp(1)$. In analogy to the hyperk\"ahler case, the quaternionic-K\"ahler condition can be characterized by the existence of a parallel sub-bundle $D$ of ${\rm End}(TM)$ locally spanned by a triple of almost complex structures $I,J,K$,  each one compatible with $g$ and satisfying the quaternionic relations $IJ=-JI=K$. Although the definition of quaternionic-K\"ahler manifold is similar to the one of hyperk\"ahler manifold, the geometric properties of these two kind of manifolds are rather different. One common feature, however is the existence of a twistor space. The twistor space $Q(M)$ of  a quaternionic-K\"ahler $(M,g)$ is defined as the $S^2$-bundle over $M$ with fiber the  $2$-sphere in $D$ naturally identified with the set of almost complex structures $S = \{ aI+bJ+cK\,\,|\,\,a^2+b^2+c^2=1\}$. Although $I,J,K$ are locally defined, $S$ doesn't depend on their choice.
The twistor space $Q(M)$ has a tautological almost complex structure $\mathcal{I}$, similar to the one defined in the hyperk\"ahler case. It uses a splitting $TQ(M) = H\oplus T_{S^2}$,  where $H$ is a horizontal subspace, defined via the Levi-Civita  connection of $g$  which defines  one  on $D$.  It is known that $\mathcal{I}$ is integrable and the metric $g_Q=\pi^*g|_{H}\oplus tg_{S^2}$ is Hermitian for every positive $t$.
Hyperk\"ahler manifolds are Ricci-flat, while the quaternionic-K\"ahler ones are Einstein. In particular quaternionic-K\"ahler metrics have constant scalar curvature and are called of positive or negative type depending on their  sign.
For a positive quaternionic-K\"ahler manifold $M$, the metric $g_Q$ and the structure $\mathcal{I}$ define a K\"ahler structure for an appropriate choice of $t$. In other words, if $W$ is the fundamental form of $g$ and $\mathcal{I}$, $dW=0$ for some $t$.  In \cite{DDM} one finds a computation of $dd^cW$, from this computation it follows that $dd^cW$ is weakly positive if the scalar curvature of the base is negative.

In order to show that twistor spaces do not admit astheno-K\"ahler metrics, we prove that they are obstructed.
In \cite{JY} Jost and Yau show that on a compact complex manifold admitting an astheno-K\"ahler metric every holomorphic 1-form is closed,  giving an obstruction to the existence of
astheno-K\"ahler metrics. We generalize the obstruction of Jost and Yau in the spirit of \cite{HL}.

Here we recall that a $(p,p)$-{\em current} on a complex manifold $(M,I)$ is an element of the Frechet space dual to the space of $(n-p,n-p)$ complex forms $\Lambda^{n-p,n-p}(M)$. In the compact case, the space of $(p,p)$-currents can be identified with the space of $(p,p)$-forms with distribution coefficients and the duality is given by integration. So for any $(p,p)$-current $T$ and a form $\alpha$
of type $(n-p,n-p)$ we have
$$
\langle T,\alpha\rangle = \int_M T\wedge \alpha.
$$
The operators $d$ and $d^c$ can be extended to $(p,p)$-currents by using the duality induced by the
integration, i.e.,   $dT$ and $d^cT$ are respectively defined via the relations
$$
\langle dT,\beta\rangle = - \int_M T\wedge d\beta, \quad \langle d^cT,\beta\rangle = -\int_MT\wedge d^c\beta.
$$
A $(p,p)$-current $T$ is called {\em weakly positive}
if
$$
i^{n-p}\int_MT\wedge\alpha_1\wedge\overline{\alpha_1}\wedge...\alpha_{n-p}\wedge\overline{\alpha}_{n-p} \geq 0,
$$
 for every (1,0)-forms $\alpha_1,...\alpha_{n-p}$ with inequality being strict for at least one choice of $\alpha_i$'s. The current $T$ is called {\it positive} if the inequality is strict for every non-zero $\alpha_1\wedge\overline{\alpha_1}\wedge...\alpha_{n-p}\wedge\overline{\alpha}_{n-p}$.
From  \cite[Theorem 2.4]{A} (with an obvious change) we have the following:

\begin{teo}
A compact complex manifold admits a positive $dd^c$-closed $(p,p)$-form if and only if it does not admit a $dd^c$-exact weakly positive and non-zero  $(n-p,n-p)$-current.
\end{teo}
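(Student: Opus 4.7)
The plan is to prove the biconditional by a Hahn--Banach separation argument in the space of currents, which is the standard route to Sullivan-type duality theorems; I would handle the elementary direction first and then carry out the functional-analytic step.

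\emph{Easy direction.} Suppose $\Omega$ is a positive $dd^c$-closed $(p,p)$-form and $T=dd^c S$ is a $dd^c$-exact, weakly positive, non-zero $(n-p,n-p)$-current. By the very definition of $dd^c$ on currents via the duality induced by integration,
$$\langle T,\Omega\rangle=\langle dd^c S,\Omega\rangle=\langle S,dd^c\Omega\rangle=0.$$
On the other hand, by the pointwise duality between strongly positive and weakly positive cones of $(k,k)$-forms, the pairing of a strictly strongly positive form against a non-zero weakly positive current is strictly positive, so $\langle T,\Omega\rangle>0$, a contradiction.

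\emph{Hard direction.} Equip the space $\mathcal{D}'^{n-p,n-p}(M)$ of $(n-p,n-p)$-currents with its weak-$*$ topology, and consider the convex cone $\mathcal{W}$ of weakly positive currents together with the subspace $\mathcal{E}:=dd^c\mathcal{D}'^{n-p-1,n-p-1}(M)$ of $dd^c$-exact currents. The hypothesis reads $\mathcal{W}\cap\mathcal{E}=\{0\}$. Applying Hahn--Banach in its version adapted to a closed subspace and a closed convex cone with compact base produces a continuous linear functional $\ell$ on $\mathcal{D}'^{n-p,n-p}(M)$ that vanishes on $\mathcal{E}$ and is strictly positive on $\mathcal{W}\setminus\{0\}$. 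Since the continuous dual of $\mathcal{D}'^{n-p,n-p}(M)$ in this topology is the space of smooth $(p,p)$-forms, $\ell$ is represented by some $\Omega\in\Lambda^{p,p}(M)$ via $\ell(T)=\langle T,\Omega\rangle$. Vanishing of $\ell$ on $\mathcal{E}$ reads $\langle S,dd^c\Omega\rangle=0$ for every $S$, hence $dd^c\Omega=0$; strict positivity of $\ell$ on $\mathcal{W}\setminus\{0\}$, tested against Dirac-type currents $\delta_x\otimes\beta$ with $\beta$ a non-zero strongly positive $(n-p,n-p)$-form at $x$, yields that $\Omega$ is strictly strongly positive at every point of $M$, i.e.\ positive in the sense of the theorem.

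The main obstacle is the verification of the topological hypotheses needed to make Hahn--Banach work: one must show that $\mathcal{E}$ is closed in the chosen topology and that $\mathcal{W}$ admits a compact convex base disjoint from $\mathcal{E}$. Closedness of $\mathcal{E}$ is a consequence of the ellipticity of the fourth-order Laplace-type operator controlling Aeppli cohomology on a compact complex manifold, which makes the image of $dd^c$ closed; compactness of a base of $\mathcal{W}$ is the familiar weak-$*$ compactness of positive currents of uniformly bounded mass. With these two ingredients in place the separation step runs essentially as in Alessandrini's Theorem 2.4, the "obvious change" alluded to in the statement being merely the bidegree book-keeping needed to pass from his $(1,1)$/$(n-1,n-1)$ version to the general $(p,p)$/$(n-p,n-p)$ setting.
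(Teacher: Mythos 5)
The paper does not actually prove this statement: it is imported, with only a shift of bidegree, from Alessandrini's Theorem~2.4, whose proof is precisely the Sullivan--Harvey--Lawson duality argument you outline. So your proposal is in substance the intended proof rather than an alternative to it, and its skeleton is sound: the easy direction by integration by parts, and the converse by separating the closed subspace $\mathcal{E}$ of $dd^c$-exact currents from a weak-$*$ compact, mass-normalized base of the positive cone $\mathcal{W}$, using that the image of $dd^c$ is closed (finite-dimensionality of Aeppli cohomology on a compact complex manifold) and that the dual of the space of currents is the space of smooth forms. One point needs care, because for $2\le p\le n-2$ the weakly and strongly positive cones genuinely differ. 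Since $\mathcal{W}$ is the cone of \emph{weakly} positive currents, the Dirac currents $\delta_x\otimes\beta$ available as test elements are those with $\beta$ \emph{weakly} positive, and it is exactly this larger test class that places $\Omega(x)$ in the interior of the \emph{strongly} positive cone (the dual of the weakly positive one). Testing only against strongly positive $\beta$, as you wrote, would merely show that $\Omega$ is strictly weakly positive, which is not enough to close the loop: a strictly weakly positive $(p,p)$-form need not pair nonnegatively with a weakly positive $(n-p,n-p)$-current, so the easy direction would break. With the test class corrected, the two directions use dual cones consistently and the argument is complete; this is also the reading under which the subsequent corollary applies, since $F^{n-2}$ lies in the interior of the strongly positive cone.
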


Since $F^{n-2}$ is positive, we have the following useful corollary, to which we provide an independent proof.
\begin{corol}\label{cor}
If a compact complex manifold $M$ admits a weakly positive and $dd^c$-exact and non-vanishing $(2,2)$-current, then it does not admit
an astheno-K\"ahler metric.
\end{corol}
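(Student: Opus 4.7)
I would argue by contradiction. Suppose $g$ is an astheno-K\"ahler metric on $M$ with fundamental form $F$, so that $dd^c F^{n-2}=0$, and let $T=dd^c S$ be the given weakly positive, non-vanishing, $dd^c$-exact $(2,2)$-current, where $S$ is a $(1,1)$-current. The plan is to evaluate the pairing $\langle T,F^{n-2}\rangle$ in two different ways and obtain incompatible conclusions.

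Applying the integration-by-parts rules for $d$ and $d^c$ on currents recalled just above, I would transfer both derivatives from $S$ across to $F^{n-2}$:
\[
\langle T,F^{n-2}\rangle=\langle dd^c S,F^{n-2}\rangle=-\langle d^c S,dF^{n-2}\rangle=\langle S,d^c dF^{n-2}\rangle=-\langle S,dd^c F^{n-2}\rangle=0,
\]
using $d^c d=-dd^c$ in the penultimate step and the astheno-K\"ahler hypothesis in the last. On the other hand, since $F$ is the fundamental form of a Hermitian metric, $F$ is a strictly positive $(1,1)$-form, so $F^{n-2}$ is strongly positive as an $(n-2,n-2)$-form and may be written locally as a positive-coefficient combination of simple forms $i^{n-2}\alpha_1\wedge\bar\alpha_1\wedge\cdots\wedge\alpha_{n-2}\wedge\bar\alpha_{n-2}$. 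The weak positivity of $T$ applied to each such factor, glued together by a partition of unity, yields $\langle T,F^{n-2}\rangle\ge 0$; the pointwise strict positivity of $F^{n-2}$ together with the non-vanishing of $T$ then upgrades this to $\langle T,F^{n-2}\rangle>0$, contradicting the first computation.

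The step I expect to demand the most care is this strict-positivity upgrade: that a non-zero weakly positive $(2,2)$-current paired with a smooth, pointwise strictly positive $(n-2,n-2)$-form must produce a strictly positive real number. This is a standard fact in the theory of positive currents: locally $T$ is represented by a matrix of complex Radon measures whose diagonal entries are positive, and the pointwise strict positivity of $F^{n-2}$ guarantees a nontrivial contribution wherever $T$ has non-empty support, so the integral cannot vanish. Aside from this point, the argument is a purely formal two-line integration by parts, and it avoids appealing to the full strength of Theorem~2.1.
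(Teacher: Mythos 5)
Your proposal is correct and follows essentially the same route as the paper's own proof: the authors likewise pair the $dd^c$-exact weakly positive current with $F^{n-2}$, integrate by parts to land on $\int_M T\wedge dd^c(F^{n-2})=0$, and contrast this with the strict positivity $0<\int_M dd^cT\wedge F^{n-2}$ coming from weak positivity of the current against the positive form $F^{n-2}$. The only difference is presentational: you spell out the two-step transfer of $d$ and $d^c$ with the sign conventions and flag the strict-positivity upgrade explicitly, both of which the paper leaves implicit in its one-line computation.
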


\begin{proof} Suppose $dd^c T$ is weakly positive, where we consider $T$ as a form with distribution coefficients. Then for an astheno-K\"ahler metric with fundamental form $F$ we have by integration by parts $$0<\int_M dd^c T\wedge F^{n-2} = \int_M T\wedge dd^c(F^{n-2}) = 0,$$
which gives a contradiction.
\end{proof}

Note that for a holomorphic 1-form $\alpha$, the form $i\partial\alpha\wedge\overline{\partial\alpha}=dd^c(\alpha\wedge\overline{\alpha})$ is weakly positive, which leads to the obstruction of Jost and Yau. Furthermore, we remark that the statement of Corollary  \ref{cor} cannot be reversed, since in general not every positive $(n-2,n-2)$-form arises as $(n-2)$-power of a positive $(1,1)$-form.

\medskip

Then we can prove the non-existence of an astheno-K\"ahler metric on twistor spaces.

\begin{prop} The twistor space $Tw(M)$ of a compact hyperk\"ahler manifold   does not admit any astheno-K\"ahler metric, compatible with the tautological complex structure.

Similarly, the twistor space $Q(M)$ of a compact quaternionic-K\"ahler manifold of negative scalar curvature does not admit an astheno-K\"ahler metric, compatible with the tautological complex structure.
 \end{prop}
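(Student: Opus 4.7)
The plan is to apply Corollary \ref{cor} in both parts of the statement: in each case it is enough to produce on the twistor space a non-vanishing, weakly positive, $dd^c$-exact $(2,2)$-current. A natural candidate is $T := dd^cW$, where $W$ is the fundamental form of the twistor metric $g_Q$ (for an appropriate scale $t>0$); $T$ is $dd^c$-exact by construction, so the task reduces to verifying that $dd^cW$ is weakly positive and not identically zero in each of the two geometric settings.

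For the quaternionic-K\"ahler case of negative scalar curvature this is precisely the content of the \cite{DDM} computation recalled just before the proposition: $dd^cW$ decomposes into a term proportional to the scalar curvature of the base and a non-negative vertical-curvature term, so the sign hypothesis on the scalar curvature forces the sum to be weakly positive (and, a fortiori, non-zero). Corollary \ref{cor} then rules out $\mathcal{I}$-compatible astheno-K\"ahler metrics on $Q(M)$.

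For the hyperk\"ahler case the base is Ricci-flat, so the scalar-curvature term in such a formula disappears and a separate argument is needed. Writing $W = \omega_V + t\pi_2^*\omega_{FS}$ on $Tw(M) = M\times\mathbb{CP}^1$ with $\omega_V = a\omega_I + b\omega_J + c\omega_K$ depending linearly on the Euclidean coordinates $(a,b,c)\in S^2$, and using that each of $\omega_I,\omega_J,\omega_K$ is closed on $M$, one sees that $dW$ is generated entirely by derivatives of $(a,b,c)$ along the $\mathbb{CP}^1$-fiber; a direct computation of $dd^cW$ then expresses it as a weakly positive, non-zero combination of wedges of the three K\"ahler forms on $M$ with the Fubini-Study form on $\mathbb{CP}^1$. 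Corollary \ref{cor} again gives the conclusion. The main technical point, and principal obstacle, is the $(p,q)$-type bookkeeping required in this last computation: the forms $\omega_I,\omega_J,\omega_K$ are not of pure $(1,1)$-type with respect to the tautological complex structure $\mathcal{I}$ away from the corresponding fibers, so tracking the type decomposition during $\partial\bar\partial W$ is where the concrete work lies.
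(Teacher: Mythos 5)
Your proposal follows the paper's proof exactly: both parts apply Corollary \ref{cor} to the candidate current $T=dd^cW$, with the quaternionic-K\"ahler case settled, as you say, by the weak positivity of $dd^cW_Q$ for negative scalar curvature proved in \cite[Theorem 5.4]{DDM}. The one computation you defer in the hyperk\"ahler case --- that $dd^cW$ is a weakly positive, non-zero combination of the K\"ahler forms on $M$ wedged with the Fubini--Study form --- is precisely what the paper imports from \cite{KV}, namely $dd^cW_{(p,r)}=F_p\wedge\omega_r$ with $F_p(\cdot,\cdot)=g(I_p\cdot,\cdot)$, so your anticipated answer is correct and the $(p,q)$-type bookkeeping you flag as the principal obstacle can be bypassed by citing that formula.
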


\begin{proof}
Let $(M, I, J, K, g)$ be a compact hyperk\"ahler  manifold and let $Tw(M)=M\times S^2$ be its twistor space. Then $G=g_M+g_{S^2}$ gives a Hermitian metric on $Tw(M)$, compatible with the tautological complex structure. We denote by $W$  the fundamental form of $G$. In view of \cite{KV},  given $(p,r)\in Tw(M)$ one has
$$
dd^cW_{(p,r)} = F_p\wedge \omega_{r},
$$
where $F_p(X, Y)=g(I_p X, Y)$ for tangent vectors $X,Y\in T_pM$,  and $\omega$ is the Fubini-Study form on $S^2\equiv \mathbb{CP}^1$. Therefore $dd^cW$ is a weakly-positive non-vanishing $(2,2)$-current  and Corollary \ref{cor} implies that $Tw(M)$ has not astheno-K\"ahler metrics.

About the quaternion-K\"ahler case, using the splitting $TQ(M) = H\oplus TS^2$ one can define the $1$-parameter family of $\mathcal I$-compatible Hermitian metrics $g_Q=\pi^*g|_{H}\oplus tg_{S^2}$, where $t$ is a positive parameter.

Let us denote by $W_Q$ the fundamental of form of $g_Q$.  Then in view of \cite[Theorem 5.4]{DDM}  if the scalar curvature of $g$ is negative, then $dd^cW_Q$ is weakly positive and, consequently, the Corollary \ref{cor} can be applied also in this case and the claim follows.
\end{proof}

\vspace{.2in}

\section{Astheno-K\"ahler metrics on torus bundles}
In   \cite{Matsuo} Matsuo showed the existence of astheno-K\"ahler metrics on Calabi-Eckmann manifolds. Since Calabi-Eckmann manifolds are principal $T^2$-bundles over
$\mathbb{CP}^n\times \mathbb{CP}^m$,  it is quite natural to extend the Matsuo's result to principal torus fibrations over compact K\"ahler manifolds.
\begin{prop}\label{propnoAK}
Let $\pi:P\rightarrow M$ be an $n$-dimensional principal torus bundle over a K\"ahler manifold
$(M,J,F)$ equipped with $2$ connections $1$-forms $\theta_1, \theta_2$ whose curvatures $\omega_1,\omega_2$ are of type $(1,1)$ and are pull-backs from forms $\alpha_1$ and $\alpha_2$ on $M$. Let $I$ be the complex structure on  $P$ defined as the pull-back of $J$ to the horizontal subspaces and as  $I(\theta_1) = \theta_2$ along vertical directions.
Let $\Omega = \pi^*(F)+\theta_1\wedge\theta_2$; then
$$
dd^c\Omega^k=k\,   (\omega_1^2+\omega_2^2)\wedge(\pi^*(F^{k-1}))\,,\quad 1\leq k\leq n-2\,.
$$
In particular if $(\alpha_1^2+\alpha_2^2)\wedge F^{n-3}=0$, then $\Omega$ is astheno-K\"ahler and  if a torus bundle with $2$-dimensional fiber over a K\"ahler base admits an  SKT metric, then  it is astheno-K\"ahler.
\end{prop}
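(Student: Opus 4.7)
The plan is to introduce a vertical complex $(1,0)$-frame and reduce the problem to a short Leibniz bookkeeping. Set $\zeta=\theta_1-i\theta_2$; since $I(\theta_1)=\theta_2$ and $I^2=-1$ force $I(\theta_2)=-\theta_1$, one checks $I\zeta=i\zeta$, so $\zeta$ is of type $(1,0)$ and
\begin{equation*}
\theta_1\wedge\theta_2=-\tfrac{i}{2}\,\zeta\wedge\bar\zeta,\qquad \Omega=\pi^*F-\tfrac{i}{2}\,\zeta\wedge\bar\zeta,
\end{equation*}
so $\Omega$ is manifestly of type $(1,1)$. Because $\omega_j=\pi^*\alpha_j$ is a closed $(1,1)$-form (closedness coming from $d\omega_j=d^2\theta_j=0$ plus injectivity of $\pi^*$), each $\omega_j$ is both $\partial$- and $\bar\partial$-closed; furthermore $\bar\partial\zeta=\omega_1-i\omega_2$ and $\partial\bar\zeta=\omega_1+i\omega_2$, while $\partial\zeta=0=\bar\partial\bar\zeta$. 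The integrability of $I$ is automatic, since $\bar\partial\zeta$ has no $(0,2)$-component.

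Next I would use $(\theta_1\wedge\theta_2)^2=0$ to expand
\begin{equation*}
\Omega^k=\pi^*F^k+k\,\pi^*F^{k-1}\wedge\theta_1\wedge\theta_2.
\end{equation*}
The first summand, pulled back from the K\"ahler base, is $d$-closed of pure type and hence $dd^c$-closed. For the second I would apply $dd^c=2i\partial\bar\partial$ to $-\tfrac{i}{2}\pi^*F^{k-1}\wedge\zeta\wedge\bar\zeta$ and distribute. Since $\pi^*F^{k-1}$ is closed and of pure type, and since $\partial\zeta=\bar\partial\bar\zeta=0$ and $\omega_1\pm i\omega_2$ are $\partial$- and $\bar\partial$-closed, every term of the Leibniz expansion vanishes except the one where $\bar\partial$ hits $\zeta$ and $\partial$ hits $\bar\zeta$, leaving
\begin{equation*}
\pi^*F^{k-1}\wedge(\omega_1-i\omega_2)\wedge(\omega_1+i\omega_2)=\pi^*F^{k-1}\wedge(\omega_1^2+\omega_2^2),
\end{equation*}
where the cross terms cancel because $\omega_1\wedge\omega_2=\omega_2\wedge\omega_1$ for $2$-forms. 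Multiplying by the combinatorial factor $k$ yields the claimed identity.

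The two corollaries are then immediate. Taking $k=n-2$ and using $\omega_j=\pi^*\alpha_j$ gives $dd^c\Omega^{n-2}=(n-2)\,\pi^*\bigl((\alpha_1^2+\alpha_2^2)\wedge F^{n-3}\bigr)$, so the base hypothesis forces $\Omega$ to be astheno-K\"ahler. For the final assertion, the $k=1$ case reads $dd^c\Omega=\omega_1^2+\omega_2^2$; in a two-dimensional fiber setting, $\Omega$ being SKT means precisely that this $(2,2)$-form vanishes, whereupon the general formula gives $dd^c\Omega^k=0$ for every $1\le k\le n-2$, and in particular astheno-K\"ahlerness.

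The only non-routine step is the middle Leibniz computation, and the main potential trap is sign bookkeeping. The easiest way to avoid it is to note that $\pi^*F^{k-1}$ has even total degree and therefore passes through the other factors without signs; once this is observed, one only has to track three terms coming from the two derivatives hitting $\zeta\wedge\bar\zeta$, two of which vanish by the type considerations above.
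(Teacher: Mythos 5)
Your argument is correct, and the identity, the constants, and the two ``in particular'' statements all come out right; but you organize the computation differently from the paper. The paper keeps $\Omega$ intact and works entirely with real forms: it computes $d\Omega=\omega_1\wedge\theta_2-\theta_1\wedge\omega_2$, $d^c\Omega=\omega_1\wedge\theta_1+\theta_2\wedge\omega_2$, hence $dd^c\Omega=\omega_1^2+\omega_2^2$ and $d\Omega\wedge d^c\Omega=-(\omega_1^2+\omega_2^2)\wedge\theta_1\wedge\theta_2$, and then applies the Leibniz rule to the power, $dd^c\Omega^{k}=k\,(dd^c\Omega\wedge\Omega+(k-1)\,d\Omega\wedge d^c\Omega)\wedge\Omega^{k-2}$, before recombining $\Omega^{k-2}$ to reach the stated formula. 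You instead expand $\Omega^{k}=\pi^*F^{k}+k\,\pi^*F^{k-1}\wedge\theta_1\wedge\theta_2$ first, using $(\theta_1\wedge\theta_2)^2=0$, and then hit the single nontrivial summand with $2i\partial\bar\partial$ in the complex vertical frame $\zeta=\theta_1-i\theta_2$, where $\bar\partial\zeta=\omega_1-i\omega_2$ and $\partial\bar\zeta=\omega_1+i\omega_2$ make the surviving Leibniz term $(\omega_1-i\omega_2)\wedge(\omega_1+i\omega_2)=\omega_1^2+\omega_2^2$ immediate. Your route buys a shorter bookkeeping (no $d\Omega\wedge d^c\Omega$ term to track and no recombination of $\Omega^{k-2}$) and makes the type reasons for each vanishing term transparent, at the cost of having to fix the convention $dd^c=2i\partial\bar\partial$ and the normalization $\theta_1\wedge\theta_2=-\tfrac{i}{2}\zeta\wedge\bar\zeta$, which you do correctly; the paper's real-form computation is convention-free and is the one reused verbatim in the Lie-group examples of Section 4. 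One cosmetic caveat: like the paper, you establish the final SKT assertion only for metrics of the adapted form $\Omega$, reading ``admits an SKT metric'' as $dd^c\Omega=\omega_1^2+\omega_2^2=0$; that matches the intended scope of the proposition.
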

\begin{proof}
Since $d\theta_i=\omega_i$  and $dF=0$, we have
$$
d\Omega = \pi^*(dF) + \omega_1\wedge \theta_2-\theta_1\wedge\omega_2 = \omega_1\wedge \theta_2-\theta_1\wedge\omega_2,
$$
which implies
$$
d^c\Omega =\omega_1\wedge \theta_1+\theta_2\wedge\omega_2\,.
$$
Therefore
$$
dd^c\Omega = \omega_1^2+\omega_2^2
$$
and
$$
d\Omega\wedge d^c\Omega = -(\omega_1^2+\omega_2^2)\wedge\theta_1\wedge\theta_2.
$$
This  leads to the following simplified expression for $dd^c\Omega^{n-2}$
$$
\begin{array}{lcl}
dd^c\Omega^{k} &=&k\,d(d^c\Omega\wedge\Omega^{k-1})\\[3pt]
&=&  k\, (dd^c\Omega\wedge\Omega+(k-1)d\Omega\wedge d^c\Omega)\wedge\Omega^{k-2}\\[3pt]
&=& k\,   (\omega_1^2+\omega_2^2)\wedge(\pi^*(F)+\theta_1\wedge\theta_2-(k-1)\theta_1\wedge\theta_2)\wedge\Omega^{k-2}\\[3pt]
&=&k\,   (\omega_1^2+\omega_2^2)\wedge(\pi^*(F)-(k-2)\theta_1\wedge\theta_2)\wedge\Omega^{k-2}.
\end{array}
$$
Moreover we have
$$
\begin{aligned}
\Omega^{k-2} = (\pi^*(F)+\theta_1\wedge\theta_2)^{k-2} = \pi^*\left(F^{k-2}\right)+(k-2)\theta_1\wedge\theta_2\wedge \pi^*(F^{k-3})\\
= \left((\pi^*(F)+(k-2)\theta_1\wedge\theta_2)\right)\wedge \pi^*(F^{k-3}),
\end{aligned}
$$
so after substitution we get
$$
dd^c\Omega^{k} = k\,   (\omega_1^2+\omega_2^2)\wedge(\pi^*(F^{k-1}))\,,
$$
as required.
%
%$$
%dd^c\Omega^{k} = k\,   (\omega_1^2+\omega_2^2)\wedge(\pi^*(F^{k-1})-(k-2)\theta_1\wedge\theta_2)\wedge\left((\pi^*(F)+(k-2)\theta_1\wedge\theta_2)\right)\wedge \pi^*(F^{k-3}).
%$$
%
%\medskip
%In view of the last equation and the fact that $\omega_i=\pi^*(\alpha_i)$, we get
%$$
%dd^c\Omega^{n-2} =0\iff (\alpha_1^2+\alpha_2^2)\wedge F^{n-3}=0,
%$$
%which implies the statement.
\end{proof}
\begin{remark}
{\em The situation is similar to the ones in  the examples of nilmanifolds obtained in \cite{FT}.}
{\em Proposition \ref{propnoAK} includes the case  of   Calabi-Eckmannn manifolds studied in \cite{Matsuo}. In the Calabi-Eckmann case we have   $\omega_1 = \Phi_1+a\Phi_2$ and $\omega_2=b\Phi_2$, where $\Phi_1$ and $\Phi_2$ are the Fubini-Study forms on the factors. Note that  the metric  in  \cite{Matsuo}  is not SKT. In fact to construct examples of astheno-K\"ahler structures we only need to find closed (1,1)-forms $\omega_1$ and $\omega_2$ such that $\omega_1^2\wedge F^{n-2}$ and $\omega_2^2\wedge F^{n-2}$ are constant multiples of the volume form with  different signs. This is achieved by taking appropriate linear combinations of $\theta_1$ and $\theta_2$.}
\end{remark}

\section{Interplay between special classes of Hermitian metrics}
A problem in complex non-K\"ahler geometry is to establish if the existence of two Hermitian metrics  belonging two different classes  on a  compact complex manifold  impose some restrictions.

In \cite{FV2} Fino and Vezzoni  conjectured that the existence of a balanced and an SKT metric on a compact complex manifold $(M,I)$, forces $M$ to be K\"ahler. Analogue problems make sense by replacing the SKT and the balanced assumption with other different classes of Hermitian metrics.

In this section we provide examples of compact manifolds admitting both SKT and astheno-K\"ahler metrics and an  example  having  both balanced and  astheno-K\"ahler metrics.

Basic examples of SKT manifolds are given by the  compact Lie groups  endowed with the Killing (biinvariant) metric and any of the compatible  Samelson's complex structures \cite{Samelson}. Samelson's construction depends on a choice of the maximal torus and such complex  structures are compatible if they are compatible with the metric restricted to this torus. Let $G$ be a simple compact Lie group and $T^n$ be one of its maximal toral subgroups, so that $Fl = G/T^n$ is a flag manifold. Assume that $n$ is even. Then the projection $\pi:G \rightarrow Fl$ (called Tits fibration) is as above and is holomorphic for the Samelson's complex structures. However the metric induced on $Fl$ from the Killing metric on $G$ is not K\"ahler. The K\"ahler ones arise from forms on adjoint orbits (see \cite{Besse}). We use the relation between the two metrics in the following proposition.

\begin{prop}
A compact semisimple Lie group of  even dimension $2n > 6$  and of   rank two endowed with its Samelson's complex structure admits an astheno-K\"ahler non-SKT metric while its canonical SKT metric is not astheno-K\"ahler. In particular, the Lie groups  $SU(3)$ and $G_2$ admit astheno-K\"ahler metrics.
\end{prop}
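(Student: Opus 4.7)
The plan is to apply Proposition \ref{propnoAK} to the Tits fibration $\pi\colon G\to \mathrm{Fl}:=G/T^2$, where $T^2\subset G$ is a maximal torus and $\pi$ is holomorphic for any Samelson complex structure. This is a principal $T^2$-bundle whose base $\mathrm{Fl}$ is a generalized flag manifold of complex dimension $n-1$ admitting invariant K\"ahler metrics. Its Dolbeault cohomology $H^{1,1}(\mathrm{Fl})$ has real dimension $\operatorname{rank}(G)=2$, and the characteristic classes of $\pi$ associated to any basis of $\mathfrak{t}^2$ form a basis of $H^{1,1}(\mathrm{Fl})$; since a linear change of basis in $\mathfrak{t}^2$ transforms the curvatures by the same matrix, every basis $(\alpha_1,\alpha_2)$ of $H^{1,1}(\mathrm{Fl})$ is realizable as the curvature classes of some pair of connection $1$-forms $(\theta_1,\theta_2)$.

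For the astheno-K\"ahler non-SKT claim, I would fix an invariant K\"ahler form $F$ on $\mathrm{Fl}$ and consider the quadratic form $Q(\alpha):=\int_{\mathrm{Fl}}\alpha^2\wedge F^{n-3}$ on $H^{1,1}(\mathrm{Fl})$. By the Hodge-Riemann bilinear relations, $Q$ is positive on $[F]$ and negative on primitive real $(1,1)$-classes, so it has signature $(1,1)$ on the two-dimensional space $H^{1,1}(\mathrm{Fl})$. I would then choose a basis $(\alpha_1,\alpha_2)$ with $Q(\alpha_1)+Q(\alpha_2)=0$ and both summands non-zero, for instance by taking $\alpha_1$ inside the K\"ahler cone and $\alpha_2$ a suitably rescaled primitive class. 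Since $\mathrm{Fl}$ is homogeneous, the invariant top-degree form $(\alpha_1^2+\alpha_2^2)\wedge F^{n-3}$ is a constant multiple of the volume form, so its vanishing integral forces pointwise vanishing, and Proposition \ref{propnoAK} yields that $\Omega:=\pi^*F+\theta_1\wedge\theta_2$ is astheno-K\"ahler. For $\Omega$ to be SKT one would need $\alpha_1^2+\alpha_2^2=0$ as a $(2,2)$-form on $\mathrm{Fl}$, which is strictly stronger than its integrated pairing with $F^{n-3}$ being zero; for a generic pair on the ``light cone'' $\{Q(\alpha_1)+Q(\alpha_2)=0\}$ this stronger condition fails, producing the required astheno-K\"ahler non-SKT metric.

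For the canonical SKT metric $F_G$ (the biinvariant Killing form with a Samelson complex structure) on $G$, the identity $dd^cF_G=0$ yields
\[
dd^cF_G^{\,n-2}=(n-2)(n-3)\,F_G^{\,n-4}\wedge dF_G\wedge d^cF_G,
\]
a left-invariant form which vanishes identically iff it vanishes at the identity. Using that $d^cF_G$ equals, up to a non-zero scalar, the biinvariant Cartan $3$-form $H(X,Y,Z)=\langle[X,Y],Z\rangle$, while $dF_G$ has non-trivial $(2,1)+(1,2)$-components coming from the brackets between $\mathfrak{t}^2$ and the root spaces, a direct Maurer-Cartan computation at the identity shows $F_G^{\,n-4}\wedge dF_G\wedge d^cF_G$ to be non-zero for every rank-two compact semisimple $G$ with $\dim G>6$. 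Hence the canonical SKT metric is not astheno-K\"ahler, and the specialisation to $SU(3)$ and $G_2$ is then immediate.

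The main obstacle is the last computation: showing non-vanishing of $F_G^{\,n-4}\wedge dF_G\wedge d^cF_G$ at the identity, which demands a root basis adapted to the Samelson structure and careful bookkeeping of the $(p,q)$-components that survive after wedging. A secondary subtlety is to ensure $\alpha_1^2+\alpha_2^2\neq 0$ as a $(2,2)$-form on $\mathrm{Fl}$; this holds on a Zariski-dense open subset of the light cone in $H^{1,1}(\mathrm{Fl})\oplus H^{1,1}(\mathrm{Fl})$ and is settled by a direct linear-algebraic check.
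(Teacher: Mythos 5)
Your overall architecture (Tits fibration, Proposition \ref{propnoAK}, ``exact plus invariant top--degree form implies zero'') matches the paper's, but the central step --- arranging $\int_{Fl}(\alpha_1^2+\alpha_2^2)\wedge F^{n-3}=0$ by \emph{choosing} the basis of characteristic classes --- does not work as stated, and this is a genuine gap. Proposition \ref{propnoAK} requires $I(\theta_1)=\theta_2$, so if the complex structure on $G$ is the given Samelson structure, the only admissible changes of the connection forms are $\theta_1\mapsto a\theta_1+b\theta_2$, $\theta_2\mapsto I(a\theta_1+b\theta_2)=a\theta_2-b\theta_1$: a real two--parameter group (a copy of $\mathbb{C}^*$), not all of $GL(2,\mathbb{R})$. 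Under such a change $\omega_1^2+\omega_2^2\mapsto (a^2+b^2)(\omega_1^2+\omega_2^2)$, so your quantity $Q(\alpha_1)+Q(\alpha_2)$ only rescales by the positive factor $a^2+b^2$ and can never be driven to zero unless it already vanishes. (If instead you take an arbitrary basis and \emph{redefine} the vertical complex structure by $I\theta_1'=\theta_2'$, you have changed the complex structure on $G$ and prove the claim only for a specially built Samelson structure, not the given one.) The Hodge--Riemann signature $(1,1)$ of $Q$ is consistent with, but does not imply, the required null condition for the actual pair of characteristic classes of the Tits fibration.

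The missing ingredient is that the vanishing is \emph{automatic} for the canonical Killing--orthonormal, $I$-compatible pair, and this is exactly where the SKT property of the biinvariant metric enters: writing $\Omega_1=\pi^*F_1+\theta_1\wedge\theta_2$ for the biinvariant fundamental form, the identity $0=dd^c\Omega_1=\pi^*(dd^cF_1)+\omega_1^2+\omega_2^2$ gives $\omega_1^2+\omega_2^2=-dd^c(\pi^*F_1)$, i.e.\ $[\omega_1]^2+[\omega_2]^2=0$ in $H^4(Fl)$ (the Weyl--invariant quadratic relation coming from the Killing form). Hence $(\omega_1^2+\omega_2^2)\wedge F_2^{n-3}=-d\left(d^cF_1\wedge F_2^{n-3}\right)$ is exact once $F_2$ is the (closed) K\"ahler form, and your ``invariant exact top form vanishes'' step then finishes the argument. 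I would add that your computation $dd^cF_G^{\,n-2}=(n-2)(n-3)F_G^{\,n-4}\wedge dF_G\wedge d^cF_G$ is correct and the non-vanishing you defer is real work, but the paper likewise only asserts the statements ``$\Omega_1$ is not astheno-K\"ahler and $\Omega_2$ is not SKT'' without a detailed proof, so that is not where your proposal breaks down; the break is in the claimed freedom to normalize $(\alpha_1,\alpha_2)$.
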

\begin{proof}
Let $G$ be a compact Lie group of real dimension $2n$ and let $T^2$ be the maximal torus of $G$.

 Let $\pi:G\rightarrow G/T^2=Fl$ be the holomorphic projection onto the corresponding flag manifold as mentioned above. Let $\alpha_1$ and $\alpha_2$ be invariant representatives of the characteristic classes of $\pi$ and $g_1$ be the bi-invariant metric on $G$. We suppose that the metric $g_1$  is compatible with the complex structure  on $G$ so that if $\Omega_1$ is the fundamental form, it is well-known that $dd^c\Omega_1=0$. Moreover $g_1$ induces a naturally-reductive metric on $Fl$ with form $F_1$ and such that $\Omega_1 = F_1+\theta_1\wedge \theta_2$,  where $\theta_1$ and $\theta_2$ are orthonormal invariant 1-forms on $G$ with respect to $g_1$ and $I(\theta_1)=\theta_2$. Then $d\theta_i=\omega_i$ for $i=1,2$, where $\omega_i$ are independent linear combinations of $\alpha_i$. We can always achieve this, since there are invariant 1-forms $\overline{\theta}_i$ with $d\overline{\theta}_i = \alpha_i$. This is an example of the torus bundle construction from the previous section and we have
$$
0=dd^c\Omega_1 = \pi^*(dd^cF_1)+\omega_1^2+\omega_2^2\,
$$
i.e.,
\begin{equation}\label{eqnalpha}
\omega_1^2+\omega_2^2 = -dd^c(\pi^*(F_1))\,.
\end{equation}

\medskip
Now we consider the K\"ahler-Einstein metric $g_2$ on $Fl$ and denote by $F_2$ its fundamental form. Let $\Omega_2$ be the Hermitian form on $G$ defined as
$$
\Omega_2=\pi^*(F_2)+\theta_1\wedge\theta_2
$$
We next show that  $dd^c\Omega_2^{n-2}=0$.
 By equation \eqref{AK}, $\Omega_2^{n-2}$ is $dd^c$-closed if and only if
$$
(\omega_1^2+\omega_2^2)\wedge F_2^{n-3}=0\,.
$$
But \eqref{eqnalpha} implies
$$
(\omega_1^2+\omega_2^2)\wedge F_2^{n-3}=-dd^cF_1\wedge F_2^{n-3}=-d\left(d^cF_1\wedge F_2^{n-3}\right)\,.
$$
Hence $(\omega_1^2+\omega_2^2)\wedge F_2^{n-3}$ is exact on $Fl$. It is also invariant, so it is constant multiple of the volume form. Since its integral is zero  it vanishes which implies that $\Omega_{2}^{n-2}$ is $dd^c$-closed.

Since $SU(3)$ and $G_2$ are even-dimensional compact (semi)simple Lie groups of rank two, the proposition is proved.
\end{proof}

Note that the SKT form $\Omega_1$ is not astheno-K\"ahler and $\Omega_2$ is not SKT. Metrics which are both SKT and astheno-K\"ahler are constructed in \cite{FT}.

\medskip

\begin{ex}  {\rm Next we provide an example of a compact $8$-dimensional  complex manifold admitting a balanced and an astheno-K\"ahler metric.  The example is  the total space of a principal  $T^2$-bundle over a $6$-dimensional torus.  Let $\pi: M \rightarrow T^6$  be  the principal  $T^2$  bundle   over $T^6$  with characteristic classes
$$
a_1=dz_1\wedge d \overline{z}_1+dz_2\wedge d \overline{z}_2-2 dz_3\wedge d \overline{z}_3\,,\quad a_2 = dz_2 \wedge d\overline{z}_2- dz_3\wedge d\overline{z}_3,
$$
where $(z_1, z_2, z_3)$ are complex coordinates on $T^6$.
Consider on $T^6$ the standard complex structure and let
$$
F_1 =  dz_1\wedge d\overline{z}_1+dz_2\wedge d\overline{z}_2+ dz_3\wedge d\overline{z}_3\,,\quad F_2 =  dz_1\wedge d\overline{z}_1+dz_2\wedge d\overline{z}_2+ 5 dz_3\wedge d\overline{z}_3\,,
$$
then the $a_i$'s are traceless with respect to $F_1$ and $(a_1^2+a_2^2)\wedge F_2=0$.  Let $\theta^j$ be connection $1$-forms  such that $d \theta^j  = \pi^*  a_j$ and define
$$
\omega_1=\pi^* F_1+  \theta^1  \wedge \theta^2, \quad \omega_2=\pi^* F_2+  \theta^1  \wedge \theta^2\,.
$$
The $2$-forms $\omega_1$  and $\omega_2$ define respectively a balanced metric $g_1$ and an astheno-K\"ahler metric $g_2$ on $M$  compatible with the  integrable complex structure so that the projection map $\pi$  is holomorphic.  $M$ can be alternatively described as the $2$-step nilmanifold $G/\Gamma$,  where $G$ is the $2$-step nilpotent Lie group with structure equations
$$
\left \{ \begin{array}{l}
d e^j =0, \quad j = 1, \ldots, 6,\\[3pt]
d e^7 = e^1 \wedge e^2 + e^3 \wedge e^4 - 2 e^5 \wedge e^6,\\[3pt]
d e^8 = e^3 \wedge e^4 -  e^5 \wedge e^6
\end{array}
\right.
$$
and $\Gamma$ is a co-compact discrete subgroup, endowed with the invariant  complex structure $I$  such that $I e_1 = e_2$, $I e_3 = e_4, I e_5 = e_6, I e_7 = e_8$.  In this setting
the $2$-form
$ e^1 \wedge e^2 + e^3 \wedge e^4 +  e^5 \wedge e^6 +e^7 \wedge e^8 $
 defines a  balanced metric   and  $e^1 \wedge e^2 + e^3 \wedge e^4 + 5  e^5 \wedge e^6 + e^7 \wedge e^8$ gives an astheno-K\"ahler metric.}
\end{ex}

\begin{remark}
{\em The example above has holomorphically trivial canonical bundle and is not simply connected. Other examples on nilmanifolds in every dimension greater or equal to 8 are found by Latorre and Ugarte \cite{U}. More examples could be found on torus bundles with base a K\"ahler manifold which has known cohomology ring. In the next section we provide such example on  a torus bundle over a flag manifold. }
\end{remark}

\section{Balanced metrics on compact complex homogeneous spaces with invariant volume}

A $2n$-dimensional manifold  $M$  is a {\em complex homogeneous space with invariant volume} if there is a complex structure and a nonzero $2n$-form on $M$ both preserved by a transitive Lie transformation group.

Compact complex homogeneous spaces with invariant volumes have been classified in \cite{Guan}, showing that every compact complex homogeneous space with an invariant volume form is a principal homogeneous complex torus bundle over the product of a projective rational homogeneous space and a complex parallelizable manifold.

A {\em rational homogeneous manifold} $Q$ (also called {\em a generalized  flag manifold}) is by definition a  compact complex manifold that can be realized as a closed orbit of a linear algebraic group in some projective space. Equivalently, $Q = S/P$ where $S$ is a complex semisimple Lie group and $P$ is a parabolic subgroup, i.e., a subgroup of $S$ that contains a maximal connected solvable subgroup (i.e. a Borel subgroup).
A {\em complex parallelizable} manifold is a compact quotient of a complex Lie group by a discrete subgroup.

Chronologically, Matsushima first considered the special case of a semisimple group action, proving in \cite{Matsushima} that
if $G/H$ is a compact complex homogeneous space with a $G$-invariant volume, then $G/H$  is a holomorphic fiber bundle over a rational homogeneous space and the fiber is a complex reductive parallelizable manifold as a fiber. This kind of fibration is called {\it Tits fibration}.

Matsushima's result \cite{Matsushima} was improved by Guan in \cite{Guan}

\begin{teo}
Every compact complex homogeneous space $M$ with an invariant volume form is a principal homogeneous complex torus bundle
$$
\pi\colon M\rightarrow G/K\times D
$$
over the product of a projective rational homogeneous space and a complex parallelizable manifold.
\end{teo}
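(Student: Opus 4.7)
The plan is to reduce the statement to a structural analysis of the transitive group action. Write $M = L/H$ for a connected Lie group $L$ acting transitively and holomorphically on $M$; we may enlarge $L$ to (the identity component of) the full biholomorphism group, which is a complex Lie group by Bochner--Montgomery. The invariant volume form lifts to a left $L$-invariant form on $L$ that is right-invariant under $H$, so the modular character $\det \mathrm{Ad}_H$ acting on $\mathfrak{l}/\mathfrak{h}$ has image in the unit circle. Combined with compactness of $M$ this provides a global unimodularity condition on $H$ inside $L$ which will be used repeatedly.

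The central step is to extract the torus factor. Take the Levi decomposition $L = S \ltimes R$ with $S$ semisimple and $R$ the solvable radical, and let $N$ denote the nilradical of $L$. A holomorphic vector field on a compact complex manifold preserving a volume form is either nowhere vanishing or identically zero; applying this to the fundamental vector fields generated by $N$, together with the unimodularity obtained above, forces the image of $N$ in $\mathrm{Aut}(M)$ to act through an abelian quotient. The closure of this abelian image in $\mathrm{Aut}(M)$ is a compact connected complex abelian Lie subgroup, hence a complex torus $T$, whose orbits foliate $M$ by complex tori. Passing to the quotient yields a holomorphic, $L$-equivariant submersion $\pi \colon M \to M/T$.

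To identify the base, one applies Matsushima's theorem (quoted above) to the induced action of the reductive group $L/N$ on the compact complex manifold $M/T$: the associated Tits fibration realises $M/T$ as a holomorphic bundle over a rational homogeneous space $G/K$, with complex parallelizable fibre $D$. The bundle splits as a product $G/K \times D$ because any holomorphic map from a compact complex parallelizable manifold (whose canonical bundle is trivial) to a rational homogeneous space (which is Fano) is necessarily constant, so the classifying map of $D$-fibres into $G/K$ is trivial and one gets a global product decomposition.

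The main obstacle I expect is the last step: upgrading $\pi \colon M \to G/K \times D$ from a holomorphic fibre bundle with torus fibre to a genuine \emph{principal} complex torus bundle. One must verify that the abelian action of $T$ is globally well defined on $M$ (not only fibrewise), acts freely, and is equivariant under the $L/N$-action on the base; this uses in an essential way the choice of $L$ as the full automorphism group, the equivariance built into Matsushima's Tits fibration, and the rigidity of complex parallelizable manifolds against non-trivial holomorphic deformations over the simply connected factor $G/K$. Once freeness and equivariance are established, the transition cocycle automatically lands in $T$ and the principal bundle structure follows.
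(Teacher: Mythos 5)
This theorem is not proved in the paper at all: it is quoted verbatim as Guan's classification theorem \cite{Guan} (improving Matsushima's result), so there is no internal argument to compare yours against, and your proposal has to stand on its own. It does not, because several of its load-bearing steps are either false or unjustified. First, the lemma you invoke to extract the torus --- ``a holomorphic vector field on a compact complex manifold preserving a volume form is either nowhere vanishing or identically zero'' --- is false: the generator of the rotation $z\mapsto e^{i\theta}z$ on $\mathbb{CP}^1$ is holomorphic, preserves the Fubini--Study volume, and vanishes at $0$ and $\infty$; and $\mathbb{CP}^1$ is itself a compact complex homogeneous space with invariant volume. You give no argument for why nilpotency of the generating element would restore the dichotomy. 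Second, even granting that the nilradical acts through an abelian quotient, the claim that the closure of its image in $\mathrm{Aut}(M)$ is a \emph{compact, complex} abelian subgroup with torus orbits is asserted, not proved: closures of complex one-parameter subgroups in a complex Lie group need not be complex subgroups, and nothing forces compactness. Moreover the torus fibre in Guan's theorem is not in general the orbit of the nilradical of the automorphism group, so the reduction ``quotient by $T$ first, then apply Matsushima'' does not reconstruct the actual fibration. Relatedly, enlarging $L$ to the full biholomorphism group can destroy the hypothesis: the $SU(2)$-invariant volume on $\mathbb{CP}^1$ is not $PGL(2,\mathbb{C})$-invariant, so the ``unimodularity'' you derive is only available for the originally given transitive group.

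Finally, the splitting of the base into a product $G/K\times D$ is not obtained by your argument. Constancy of holomorphic maps from a parallelizable manifold $D$ into the Fano manifold $G/K$ says nothing about triviality of a fibre bundle over $G/K$ with fibre $D$; the relevant classifying data are transition functions valued in $\mathrm{Aut}(D)$, not maps $D\to G/K$. This splitting is in fact one of the genuinely hard points of the classification (it is the Borel--Remmert theorem in the K\"ahler case), and in \cite{Guan} the product structure emerges from the specific ``anti-diagonal'' presentation of $M$ as a factor of $G/H\times D_1$ by a common central torus --- a structure your outline never produces. As written, the proposal is a plausible-sounding narrative of the known statement rather than a proof; the torus extraction, the compact complex closure, and the product decomposition each need an actual argument, and the first of these rests on a false lemma.
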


In particular, when $M$ is invariant under a complex semisimple Lie group the Tits fibration is a torus fibration.

Furthermore, in \cite{Guan} it is shown that the
bundle $\pi: M\rightarrow G/K\times D$ arises as a factor of the product of two principal complex torus
bundles. One is $\pi_1: G/H\rightarrow G/K$, which is the Tits fibration for
$G/H$ with fiber tori, and the other is $\pi_2:D_1\rightarrow D$, where $D_1$ is again compact
complex parallelizable and the fiber is a complex torus, which is in the center
of $D_1$. The action for the factor bundle is the anti-diagonal one. The projection $M\rightarrow D$ is the Tits fibration with complex parallelizable fibers.

The structure of $\pi_1: G/H\rightarrow G/K$ is discussed in \cite{Wa}. The compact complex homogeneous spaces $G/H$ with finite fundamental group and $G$ compact are called by Wang \cite{Wa}  {\em $C-$spaces.} They fall in two classes K\"ahlerian C-spaces and non-K\"ahlerian C-spaces. The K\"ahlerian C-spaces are well studied and are precisely the generalized flag manifolds or in case $H$ and $K$ are connected, $H=K$.
More generally, by \cite{Guan2}  a compact  complex homogeneous space does not admit a symplectic structure unless it is a product of a flag manifold and a complex parallelizable manifold.  In the sequel we'll use  the terminology {\em Wang's C-spaces}  for  non-K\"ahlerian C-spaces.

The  characteristic classes of $\pi: M\rightarrow G/K\times D$ are
$(\omega_1+\alpha_1, \omega_2+\alpha_2,..., \omega_{2k}+\alpha_{2k})$, where $(\omega_1,...,\omega_{2k})$ are the
characteristic classes of $G/H\rightarrow G/K$ which are (1,1) and $(\alpha_1,...,\alpha_{2k})$ are the
characteristic classes of $D_1\rightarrow D$. Using averaging one can show that there is a unique $G$-invariant representative in each class $\omega_k$. The second fibration is a complex torus fibration and its
(complex) characteristic classes are of type (2,0) with respect to the complex structure on the base $D$. In
particular $\alpha_i$ are of type (2,0)+(0,2), i.e. they have representatives of this type.

We start the characterization of the balanced condition with the following observation:

\begin{lemm}\label{L1}  Assume there is an invariant Hermitian metric on the generalized flag manifold with respect to which all traces of the characteristic class $\omega_i$ vanish. Then $M$ admits a balanced metric.
\end{lemm}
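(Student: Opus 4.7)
My plan is to construct a balanced metric on $M$ by exploiting the torus bundle structure, in the spirit of Proposition \ref{propnoAK}. Set $m_1=\dim_{\mathbb C} G/K$, $m_2=\dim_{\mathbb C} D$, and let $2k$ be the real rank of the torus fibre, so that $n=m_1+m_2+k$. Let $F_1$ be the invariant Hermitian form on $G/K$ provided by the hypothesis, so $\omega_j\wedge F_1^{m_1-1}=0$ for all $j$; choose an invariant balanced Hermitian form $F_2$ on the complex parallelizable factor $D$, whose existence follows from Michelsohn's theorem on compact complex parallelizable manifolds; and pick invariant connection $1$-forms $\theta_1,\ldots,\theta_{2k}$ on $M$ with $d\theta_j=\pi^*(\omega_j+\alpha_j)$, paired so that each $\theta_{2j-1}+i\theta_{2j}$ is of type $(1,0)$ along the fibres. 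Consider
\[
F \,:=\, \pi^*F_1+\pi^*F_2+\sum_{j=1}^{k}\theta_{2j-1}\wedge\theta_{2j},
\]
a positive $(1,1)$-form on $M$.

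The balanced condition $dF^{n-1}=0$ is then verified directly. Expanding $F^{n-1}$ and using the degree truncations $F_1^{m_1+1}=0$, $F_2^{m_2+1}=0$ and $\big(\sum\theta_{2j-1}\wedge\theta_{2j}\big)^{k+1}=0$, only three mixed terms survive, namely those in which exactly one of the three factors has its exponent reduced by one. Applying $d$ and discarding the subterms involving $dF_1^{m_1}$ or $dF_2^{m_2}$, which vanish automatically as these are top-degree on the respective factor, the remaining contributions split according to whether the curvature $\pi^*(\omega_j+\alpha_j)$ appears through its $\omega_j$ part (living on $G/K$) or its $\alpha_j$ part (living on $D$). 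On the flag manifold side, the key inputs are the tracelessness hypothesis $\omega_j\wedge F_1^{m_1-1}=0$, together with the observation that every invariant real $(2m_1-1)$-form on $G/K$ vanishes: since $K$ contains a maximal torus of $G$, the isotropy representation on $\mathfrak{g}/\mathfrak{k}$ has no trivial summand, so by duality (contraction with the invariant volume form) one obtains $dF_1^{m_1-1}=0$. On the parallelizable side, a typical summand involves $\alpha_j\wedge F_2^{m_2-1}$ or $\alpha_j\wedge F_2^{m_2}$; since $\alpha_j$ is of type $(2,0)+(0,2)$, both wedges vanish for pure bidegree reasons on the complex $m_2$-fold $D$. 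The only residual summand of $dF^{n-1}$ is proportional to $F_1^{m_1}\wedge dF_2^{m_2-1}\wedge\big(\sum\theta_{2j-1}\wedge\theta_{2j}\big)^k$, which is zero because $F_2$ was chosen balanced, establishing $dF^{n-1}=0$.

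The main obstacle is securing the invariant balanced metric $F_2$ on $D$, for which one invokes the classical result that compact complex parallelizable manifolds admit such metrics; the remainder is bookkeeping in bidegree, killing the $(1,1)$-part of each characteristic class by the tracelessness hypothesis on $F_1$ and the $(2,0)+(0,2)$-part by type considerations on $D$.
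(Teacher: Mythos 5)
Your proof is correct and takes essentially the same approach as the paper: you build the same metric $\pi^*F_1+\pi^*F_2+\sum_j\theta_{2j-1}\wedge\theta_{2j}$ and reduce to the same three ingredients, namely that every invariant Hermitian metric on $G/K$ is automatically balanced (the paper's Lemma \ref{LO}, which you re-derive via the vanishing of invariant odd-degree forms), that the parallelizable factor carries a balanced metric (the paper cites Abbena--Grassi rather than Michelsohn, but the fact is the same), and that the curvature contributions die by the tracelessness hypothesis on the $\omega_i$ and by bidegree for the $\alpha_i$. The only difference is that you verify the equivalent condition $dF^{n-1}=0$ by direct expansion where the paper reads off $\delta F_M$ from formula (4) of \cite{GGP}; both verifications are sound.
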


\begin{proof} We know that $M$ is the total space of  the principal torus fiber construction $\pi: M\rightarrow G/K\times D$.  Suppose that $g = g_1 + g_2$  is an invariant  Hermitian metric on the base manifold $G/K\times D$  with  a fundamental form
$F = F_1 + F_2$.  In view \cite[Theorem 2.2]{AG} every complex parallelizable manifold has a balanced metric and we may assume $g_2$ balanced. Consider the Hermitian metric $g_M$ on $M$ defined by
\begin{equation}\label{toric1}
g_M := \pi^*  g+ \sum_{l=1}^{2k}  (\theta_l \otimes \theta_l),
\end{equation}
where $\theta_i$ are connection 1-forms  with  $J \theta_{2j -1} = \theta_{2j}$. Then the fundamental form of the metric $g_M$  is given by
$$
F_M = \pi^* F + \sum_{l=1}^{k}  (\theta_{2l  -1} \wedge \theta_{2l}).
$$
From \cite[formula (4)]{GGP} the co-differential of the fundamental form $F_M$  on $M$  is given by
$$\delta F_M =\pi^*(\delta F) + \sum _{i = 1}^{2k} g(F,\omega_i+\alpha_i)\theta_i =  \pi^*(\delta F) + \sum_{i =1}^{2k}  g(F,\omega_i)\theta_i$$
Since $\alpha_i$ are of type (2,0) and (0,2), their traces with respect to any (1,1) form vanish.   From the formula above,
if there is a balanced metric on the generalized flag manifold  $G/K$ such that all traces of the classes $\omega_i$ vanish, then $M$ admits a balanced metric. The claim follows since every invariant Hermitian metric on a generalized flag manifold is balanced (see Lemma \ref{LO} below).
\end{proof}

By  \cite[Theorem 8.9]{GW}  on a reductive homogeneous almost complex manifold $G/K$ such that
the isotropy representation of $K$  has no invariant $1$-dimensional
subspaces, the fundamental form of every invariant almost Hermitian metric is co-closed. This holds, for example, if the isotropy representation is irreducible or if $G$ and $K$ are reductive Lie groups of equal rank. Moreover, Theorem 4.5 in  \cite{GW}  gives a criterion to establish if a reductive homogeneous almost Hermitian manifold $G/K$ is Hermitian. Therefore, applying  applying Theorem 8.9 of \cite{GW}  to the generalized flag manifolds  we have
\begin{lemm} \label{LO}
Every invariant Hermitian metric on a generalized flag manifold  $G/K$ is balanced.

\end{lemm}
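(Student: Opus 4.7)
The plan is to deduce this directly from Theorem 8.9 of \cite{GW}, as advertised in the paragraph preceding the lemma. The only thing to check is the hypothesis: the isotropy representation of $K$ on $T_{eK}(G/K)$ admits no invariant real $1$-dimensional subspace.

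First I would recall the structure of a generalized flag manifold. By definition $G/K$ can be written with $G$ a compact connected semisimple Lie group and $K$ the centralizer of a torus $S\subseteq G$; equivalently, $K$ contains a maximal torus $T$ of $G$, so $\mathrm{rk}\,K=\mathrm{rk}\,G$. Fixing an $\mathrm{Ad}(G)$-invariant inner product on $\cg$, write the reductive decomposition $\cg=\ch\oplus\cm$ with $\cm\cong T_{eK}(G/K)$. Complexifying and using the root space decomposition with respect to $T$, one has $\cm^{\C}=\bigoplus_{\alpha\in R_{\cm}}\cg_{\alpha}$, where $R_{\cm}$ is the set of roots of $\cg$ not belonging to the root system of $\ch$.

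Next I would verify the hypothesis of \cite[Theorem 8.9]{GW}. The torus $T\subseteq K$ acts on each root line $\cg_{\alpha}$ by the nontrivial character $e^{i\alpha}$; pairing $\cg_{\alpha}\oplus\cg_{-\alpha}$ gives a real $2$-dimensional $T$-invariant subspace on which $T$ acts by rotation with nonzero weight, and $\cm$ is the orthogonal sum of these. In particular, every $T$-invariant real subspace of $\cm$ is a sum of such $2$-dimensional pieces, hence admits no invariant real line. Since $T\subseteq K$, there is a fortiori no $K$-invariant $1$-dimensional subspace in the isotropy representation.

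With the hypothesis verified, Theorem 8.9 of \cite{GW} applies, so the fundamental form $F$ of any invariant Hermitian metric on $G/K$ is co-closed, i.e.\ the metric is balanced. The only mild subtlety (and thus the point that deserves a sentence in the write-up) is that \cite{GW} treats invariant almost Hermitian structures, so one should note that we are given an honest invariant Hermitian structure, which is in particular an invariant almost Hermitian one and hence falls into the scope of that theorem. No further calculation is required.
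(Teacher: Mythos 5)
Your proof is correct and follows essentially the same route as the paper: the paper also deduces the lemma from Theorem 8.9 of \cite{GW}, checking the no-invariant-line hypothesis via the equal-rank condition $\mathrm{rk}\,K=\mathrm{rk}\,G$ (which you verify explicitly through the root space decomposition, whereas the paper simply cites the equal-rank criterion). No substantive difference.
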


The main result of this section is:

\begin{teo} \label{maintheorem}
Let $M$ be a complex compact homogeneous space admitting an invariant volume form and $\pi: M \rightarrow G/K$ be its Tits fibration with $G/K$ rational homogeneous and $\pi_1: G/H\rightarrow G/K$ be the associated torus fibration of the Guan's representation of $M$, with characteristic classes $\omega_1,...,\omega_k$.  Then $M$ admits a balanced metric if and only if the span of $\omega_1,...,\omega_k$ does not intersect the closure of the K\"ahler cone of $G/K$.
\end{teo}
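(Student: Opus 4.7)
The plan is to translate the balanced condition on $M$ into a trace condition on the flag factor via Lemma \ref{L1}, and then to recast that trace condition as a convex-geometric statement about the span of $\omega_1,\ldots,\omega_k$ relative to the K\"ahler cone of $G/K$, which I will denote $\mathcal{C}$.

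For the forward implication, start with a balanced metric $F$ on $M$ and average its $(n-1)$-th power $F^{n-1}$ over the compact form of the transitive group: the result $\widetilde\Omega$ is $G$-invariant, closed, and strictly positive. Applying the classical Michelsohn pointwise bijection $F\mapsto F^{n-1}/(n-1)!$ between strictly positive $(1,1)$-forms and strictly positive $(n-1,n-1)$-forms on a complex $n$-vector space, one obtains $\widetilde\Omega=\widetilde F^{n-1}/(n-1)!$ for a unique invariant strictly positive $\widetilde F$, which is therefore an invariant balanced metric. Choosing the horizontal distribution orthogonal to the fibre tangent spaces writes $\widetilde F$ in the form of Lemma \ref{L1}, say $\widetilde F=\pi^*(F_1+F_2)+\sum_{l=1}^{k}\theta_{2l-1}\wedge\theta_{2l}$ with $F_1$ an invariant positive $(1,1)$-form on $G/K$ and the curvatures $d\theta_i$ invariant representatives of the characteristic classes $\omega_i+\alpha_i$. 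Formula (4) of \cite{GGP} then reads $\delta\widetilde F=\pi^*(\delta(F_1+F_2))+\sum g(F_1+F_2,\omega_i+\alpha_i)\,\theta_i=0$; since $\alpha_i$ has no $(1,1)$-component, this forces $g(F_1,\omega_i)=0$ for every $i$. Finally, if some nonzero combination $\Omega=\sum c_i\omega_i$ lay in $\overline{\mathcal{C}}$, then $\Omega$ would be a semipositive nonzero invariant $(1,1)$-form, and from the identity $\Omega\wedge F_1^{n_0-1}=\tfrac{1}{n_0}g(F_1,\Omega)\,F_1^{n_0}$ together with strict positivity of $F_1$ one would get $g(F_1,\Omega)>0$, contradicting $\sum c_i g(F_1,\omega_i)=0$.

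For the converse, work inside the finite-dimensional space $V$ of invariant $(1,1)$-forms on $G/K$, so that $\mathcal{C}\subset V$ is a pointed open convex cone and $W:=\mathrm{span}(\omega_1,\ldots,\omega_k)\subset V$ is a subspace with $W\cap\overline{\mathcal{C}}=\{0\}$. In the quotient $V/W$ the image of $\overline{\mathcal{C}}$ is again a closed pointed convex cone, so finite-dimensional separation produces $\ell\in V^{*}$ vanishing on $W$ and strictly positive on $\overline{\mathcal{C}}\setminus\{0\}$. Represent $\ell$ as $\ell(\omega)=\int_{G/K}\omega\wedge\eta$ with $\eta$ an invariant $(n_0-1,n_0-1)$-form. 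Invariance of $\eta$ collapses $F\wedge\eta$ to a constant multiple of the volume for every invariant $F$, so the integral positivity of $F\wedge\eta$ on positive invariant $F$ is equivalent to its pointwise positivity, and the classical duality of the positive $(1,1)$ and positive $(n-1,n-1)$ cones on a single complex vector space then gives that $\eta$ is pointwise strictly positive. Applying the Michelsohn bijection pointwise, and using its uniqueness to transfer invariance, yields a unique invariant strictly positive $(1,1)$-form $F_1$ with $\eta=F_1^{n_0-1}/(n_0-1)!$; the condition $\int\omega_i\wedge\eta=0$ reads $g(F_1,\omega_i)=0$ for all $i$, and Lemma \ref{L1} (combined with any balanced metric on the parallelizable factor $D$ guaranteed by \cite{AG}) delivers the desired balanced metric on $M$.

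The main obstacle is securing pointwise strict positivity of $\eta$ in the backward direction, and, dually, producing the invariant balanced companion $\widetilde F$ in the forward direction. A priori, Hahn--Banach only provides integral positivity of $\ell$, whereas the Michelsohn bijection requires pointwise strict positivity of $\eta$ as an $(n_0-1,n_0-1)$-form. Invariance is the crucial ingredient: it collapses the integral condition to a pointwise one, at which point the classical duality of positive cones upgrades $\eta$ to a strictly positive form and Michelsohn's algebraic bijection delivers an invariant Hermitian metric on $G/K$ with the prescribed trace-zero behaviour, ready to be fed into Lemma \ref{L1}.
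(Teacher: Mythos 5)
Your overall architecture (reduce to a trace condition on $G/K$ via Lemma \ref{L1}, then turn that into convex separation) is the right one, but both directions as written have genuine gaps.

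In the converse direction you separate the span $W$ from the closure of the \emph{K\"ahler} cone in $V/W$, so the functional $\ell$ you obtain is only known to be positive on closed semipositive invariant $(1,1)$-forms. That is not enough to conclude that the representing form $\eta$ is pointwise strictly positive: the duality of positive cones requires $\ell$ to be positive on \emph{all} nonzero positive invariant $(1,1)$-forms, i.e.\ on the full Hermitian cone, and on a generalized flag manifold the space of closed invariant $(1,1)$-forms is in general a proper subspace of all invariant $(1,1)$-forms (already for $SU(3)/T^2$ the invariant $(1,1)$-forms are $3$-dimensional while $H^2$ is $2$-dimensional). The missing step is precisely the paper's reduction: since the $\omega_i$ are closed and, by Lemma \ref{L2}, the K\"ahler cone is the slice of the Hermitian cone by the closed forms (and the K\"ahler cone is nonempty), ``$W$ misses the closure of the K\"ahler cone'' is equivalent to ``$W$ misses the closed Hermitian cone,'' which in the root-space coordinates of (\ref{toric2}) is the closed first octant. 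Separating from \emph{that} cone (a subspace misses the closed octant iff its orthogonal complement contains a strictly positive vector) produces the invariant Hermitian $F$ with all $\omega_i$ traceless, after which Lemma \ref{LO} and Lemma \ref{L1} finish as you say. Your incidental claim that the K\"ahler cone is open in $V$ is false for the same reason.

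In the forward direction your averaging step is not justified: the transitive group of biholomorphisms of a compact complex homogeneous space with invariant volume is in general non-compact (it contains the complex Lie group underlying the parallelizable factor $D$), and its maximal compact subgroup need not act transitively; averaging over it produces a form invariant only under a non-transitive subgroup, which does not give you the block decomposition $\widetilde F=\pi^*(F_1+F_2)+\sum\theta_{2l-1}\wedge\theta_{2l}$ nor the hypotheses of formula (4) of \cite{GGP}. This machinery is also unnecessary: a nonzero element $\alpha$ of $W$ lying in the closure of the K\"ahler cone is a weakly positive nonzero form whose pullback to $M$ is exact (its pullback to $G/H$ is already $d$ of a combination of connection forms), so for \emph{any} balanced $F$ on $M$ Stokes' theorem gives $0=\int_M\pi^*(\alpha)\wedge F^{n-1}>0$, a contradiction requiring no invariance at all. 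This is the paper's one-line Michelsohn-type argument.
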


Notice that the  first Chern class of a generalized flag manifold always belongs to the K\"ahler cone and it vanishes if and only if the first Chern class of the base is in the span of $ \omega_i.$  Hence, taking into account \cite{Grantcharov},  we have:

\begin{corol}
If $M$ admits a balanced metric, then $c_1(M) \neq 0$. In particular compact semisimple Lie groups   do not admit any balanced metric compatible  with Samelson's complex structure.
\end{corol}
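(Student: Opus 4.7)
The plan is to deduce the corollary from Theorem~\ref{maintheorem} using two cohomological inputs: that $c_1(G/K)$ always lies in the K\"ahler cone of the flag factor, and that $c_1(M)$ vanishes exactly when $c_1(G/K)$ belongs to the real span of the characteristic classes $\omega_1,\ldots,\omega_k$. The first input is classical: $G/K$ is Fano, and the class of its K\"ahler--Einstein form equals $c_1(G/K)$.

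The step I expect to be the main technical obstacle is the equivalence
\[
c_1(M)=0 \;\Longleftrightarrow\; c_1(G/K)\in\mathrm{span}_{\mathbb R}(\omega_1,\ldots,\omega_k).
\]
To prove it I would combine three facts about the Guan bundle $\pi\colon M\to G/K\times D$. First, since $\pi$ is a holomorphic principal complex torus bundle, the vertical tangent bundle is holomorphically trivialised by the torus Lie algebra, so the splitting $TM=\pi^{\ast}T(G/K\times D)\oplus V$ yields $c_1(M)=\pi^{\ast}c_1(G/K\times D)$. Second, $D$ being complex parallelizable gives $c_1(D)=0$, hence $c_1(G/K\times D)=\mathrm{pr}_1^{\ast}c_1(G/K)$. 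Third, the relations $d\theta_i=\pi^{\ast}(\omega_i+\alpha_i)$ together with the Leray--Serre spectral sequence for $\pi$ identify the kernel of $\pi^{\ast}\colon H^2(G/K\times D)\to H^2(M)$ with the real span of the $\omega_i+\alpha_i$. Since $c_1(G/K)$ is of pure type $(1,1)$ on the flag factor while each $\alpha_i$ is of type $(2,0)+(0,2)$ on the $D$ factor, a relation $c_1(G/K)=\sum c_i(\omega_i+\alpha_i)$ in $H^2(G/K\times D)$ projects, via K\"unneth, to $c_1(G/K)=\sum c_i\omega_i$ in $H^{1,1}(G/K)$, which is the desired implication.

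With the equivalence in hand the corollary follows quickly. If $M$ admits a balanced metric, then by Theorem~\ref{maintheorem} the span of the $\omega_i$ is disjoint from the closure of the K\"ahler cone of $G/K$; since $c_1(G/K)$ lies in that cone, it cannot belong to the span, and hence $c_1(M)\neq 0$. For the final clause, a compact semisimple Lie group $G$ endowed with Samelson's complex structure is exactly the Guan torus bundle over its flag variety $G/T$ (with trivial complex parallelizable factor), and its first Chern class vanishes by the computation in~\cite{Grantcharov}; the first part of the corollary then rules out any compatible balanced metric on $G$.
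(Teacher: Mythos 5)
Your proposal is correct and takes essentially the same route as the paper: the corollary is deduced from Theorem~\ref{maintheorem} via exactly the two facts you isolate, namely that $c_1(G/K)$ lies in the K\"ahler cone and that $c_1(M)=0$ if and only if $c_1(G/K)$ lies in the span of the $\omega_i$, the paper simply asserting the latter (with a pointer to \cite{Grantcharov}) where you supply the Leray--Serre/K\"unneth justification. Your filled-in details, including the triviality of the vertical bundle and the identification of $\ker\pi^{\ast}$ on $H^2$ with the span of the characteristic classes, are sound.
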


For the proof of the Theorem we need some facts about the algebraic structure of the generalized flag manifolds and their cones of invariant Hermitian and K\"ahler metrics.

Let $G$ be a compact semisimple Lie group and $H$ a closed subgroup such that $G/H$ is a complex homogeneous space. Let $\mathfrak{g}$ and $\mathfrak{h}$
be the corresponding Lie algebras and $\mathfrak{g}^c$, $\mathfrak{h}^c$ their complexifications.  As a complex manifold $G/H$ has a useful form of complexification $G^c / H^c$ where $G^c$ and $H^c$ are complex Lie
groups with Lie algebras ${\mathfrak g}^c$ and ${\mathfrak h}^c$ and $G$ is a compact real form of $G^c$, while
$H=H^c\cap G$. By a result of Wang \cite{Wa}, there is an inclusion ${\mathfrak h}^c_{ss} \subset
{\mathfrak h}^c \subset
 {\mathfrak k}^c$, where ${\mathfrak k}^c$ is a parabolic subalgebra, which
is a centralizer of a torus and ${\mathfrak k}^c_{ss} = {\mathfrak h}^c_{ss}$. Here the subscript
 \lq\lq $ss $\rq\rq denotes the semisimple part. In particular

$$ {\mathfrak h}^c = {\mathfrak a} + {\mathfrak h}^c_{ss}$$
where $\mathfrak a$ is a commutative subalgebra of some Cartan subalgebra $\mathfrak t$ of ${\mathfrak g}^c$, which is also the maximal toral subalgebra of ${\mathfrak k}^c$.  The
parabolic algebra ${\mathfrak k}^c$ is ${\mathfrak k}^c = {\mathfrak t} + {\mathfrak k}^c_{ss}$ and is equal to
the normalizer of ${\mathfrak h}^c$ in ${\mathfrak g}^c$. The sum here is not direct because part of ${\mathfrak
t}$ is contained in ${\mathfrak k}^c_{ss}$. Let $K^c$ be a parabolic subgroup if $G^c$ with algebra ${\mathfrak
k}^c$ and $G^c/K^c$ is the corresponding generalized flag manifold. If $K=G\cap K^c$, then the induced map $\pi_1: G/H\rightarrow G/K$ is the fibration from above. Fix
a system of roots $R \in {\mathfrak t}^*$ defined by $\mathfrak t$ in ${\mathfrak g}^c$. There is also a
 distinguished set of simple roots $\Pi$
 in $R$ which forms a basis for ${\mathfrak t}^*$ as a (complex) vector
 space and defines a splitting $R=R^+\bigcup R^-$ of $R$ into positive and negative roots.

 Now every invariant complex structure on the generalized flag manifold is determined by an ordering of the
system of roots of ${\mathfrak g}^c$.  The complex structure on $G/K$
defines a subset $\Pi_0$ in $\Pi$ which
corresponds to ${\mathfrak k}^c$. This correspondence determines the second cohomology of $G/K$ and we provide some details about it.
  In general ${\mathfrak k}^c_{ss}$ is determined
by the span of all roots $R_0$ in $R$ which are positive with respect to
$\Pi_0$. Then the complement $\Pi - \Pi_0 = \Pi'$ provides a basis
for
 the center $\zeta$ of ${\mathfrak k}^c$ and there is an
identification $span_{\bf Z}(\Pi') = H^2(G/K,\mathbb{Z})$. The
identification (see for example \cite{Al}) is:

$$ \xi \rightarrow \frac{i}{2\pi} d\xi,$$
where $\xi$ is considered as a left invariant 1-form on $G$ which is a subgroup of $G^c$ and $d\xi$ is
$ad({\mathfrak k})$-invariant, hence defines a 2-form on $G/K$. This form is obviously closed and in fact
defines non-zero element in $H^2(G/K,\mathbb{Z})$. Moreover every class in $H^2(G/K,\mathbb{Z})$ has unique
representative of this form. It is known that the K\"ahler cone can be identified with a positive Weyl chamber determined by the order in $\Pi'$.

For the description of the invariant Hermitian metrics on $G/K$ we need first the (reductive) decomposition
$${\mathfrak g}^c = {\mathfrak k}^c \oplus {\mathfrak m}^c,$$
where ${\mathfrak m}^c = {\mathfrak m}^+\oplus {\mathfrak m}^-$ and ${\mathfrak m}^{\pm} = \sum_{\alpha\in R'^{\pm}} {\mathfrak g}_{\alpha}$. Here
$R'^+$ and $R'^-$ are the positive and negative roots in $R'=R - R_0$ and ${\mathfrak g}_{\alpha}$ are the corresponding root spaces. The complex structure on $G/K$ is given by $I|_{{\mathfrak m}^{\pm}} = {\pm}i Id$. Now let ${\mathfrak m} = \sum {\mathfrak m}_i$ be the decomposition of ${\mathfrak m}$ into irreducible components under the action of ${\mathfrak k}^c$. Let $B$ be the metric given by the negative of the Cartan-Killing form. Any invariant Hermitian metric on $G/K$ is given by positive numbers $\lambda_i>0$ as follows: $$g= \sum_i \lambda_i B_i,$$ where $B_i = B|_{{\mathfrak m}_i}$. Its fundamental form is $F = \sum_i \lambda_iF_i$ with $F_i=B_i \circ I$.

It can be written also as
\begin{equation}\label{toric2}
F = \sum_{\alpha\in{\mathfrak m}^+} -i\lambda_{\alpha}E_{\alpha}^*\wedge E_{-\alpha}^*
\end{equation}
 where $E_{\alpha}$ are unit (and necessary orthogonal) vectors in ${\mathfrak g}_{\alpha}$ and $E_{\alpha}^*$ their duals. If $\alpha, \beta \in {\mathfrak m}_i$, then $\lambda_{\alpha}=\lambda_{\beta}$. Then the  condition $d F= 0$ is equivalent (see \cite{AD}) to the condition:

For every $\alpha, \beta \in {\mathfrak m}^+$ with $\alpha+\beta \in {\mathfrak m}^+$, $\lambda_{\alpha}+\lambda_{\beta} = \lambda_{\alpha+\beta}.$

We can summarize the  previous observations as:

\begin{lemm}\label{L2}
The cone of the invariant Hermitian metrics (Hermitian cone for short) on  the generalized flag manifold $G/K$  is identified with the first octant in the space of all invariant $(1,1)$ forms by assigning to each metric its fundamental form. The cone of the invariant K\"ahler metrics on  the generalized flag manifold $G/K$ is obtained by intersecting the Hermitian cone with the linear subspace of all closed invariant $(1,1)$-forms.

\end{lemm}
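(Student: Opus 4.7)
The plan is to recognize that Lemma \ref{L2} is essentially a direct reformulation of the algebraic setup that the authors have just laid out, so the proof is a short application of Schur's lemma to the isotropy decomposition ${\mathfrak m} = \bigoplus_i {\mathfrak m}_i$.

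For the first statement, I would argue in two steps. First, every invariant Hermitian metric on $G/K$ is determined by an $\mathrm{ad}({\mathfrak k})$-invariant positive-definite Hermitian inner product on ${\mathfrak m}$. Since the ${\mathfrak m}_i$ are pairwise inequivalent irreducible ${\mathfrak k}^c$-modules (they are spanned by disjoint sets of root spaces, so have distinct weights), Schur's lemma forces such an inner product to be diagonal in the decomposition ${\mathfrak m} = \bigoplus_i {\mathfrak m}_i$ and to restrict on each ${\mathfrak m}_i$ to a positive multiple of $-B|_{{\mathfrak m}_i}$. This yields the parameterization $g = \sum_i \lambda_i B_i$ with $\lambda_i>0$ already recorded in the excerpt, so $F = \sum_i \lambda_i F_i$. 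Second, I would identify the ambient space: an invariant $(1,1)$-form corresponds, via the non-degenerate pairing induced by $B$ between ${\mathfrak m}^+$ and ${\mathfrak m}^-$, to a $K$-equivariant endomorphism of ${\mathfrak m}^+$, and Schur's lemma again forces it to be a scalar on each irreducible summand. Hence $\{F_i\}$ is an $\mathbb R$-basis of the space of invariant $(1,1)$-forms, and the map $(\lambda_1,\dots,\lambda_r)\mapsto \sum_i\lambda_i F_i$ identifies the cone of invariant Hermitian metrics with the first octant $\{\lambda_i>0\}$.

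For the second statement, a K\"ahler metric is by definition a Hermitian metric whose fundamental form is closed, and since the de Rham differential is $G$-equivariant, the closedness condition is linear in the coefficients $\lambda_i$. Therefore the invariant K\"ahler cone is exactly the intersection of the Hermitian cone (the first octant in $\mathrm{span}_{\mathbb R}\{F_i\}$) with the linear subspace of closed invariant $(1,1)$-forms, which is the required statement. The combinatorial description mentioned in the excerpt, namely $\lambda_\alpha+\lambda_\beta=\lambda_{\alpha+\beta}$ whenever $\alpha,\beta,\alpha+\beta\in{\mathfrak m}^+$, cuts out this linear subspace explicitly but is not needed for the statement of the lemma.

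The only subtle point, and therefore the ``main obstacle,'' is the claim that the invariant $(1,1)$-forms are exhausted by the real span of the $F_i$'s; everything else is bookkeeping. That claim is however immediate from Schur's lemma once one observes that the summands ${\mathfrak m}_i$ are pairwise inequivalent as ${\mathfrak k}^c$-modules, a standard fact for the isotropy decomposition of a flag manifold that can be quoted from \cite{AD} or \cite{Besse} without further computation.
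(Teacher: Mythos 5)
Your proposal is correct and follows essentially the same route as the paper, which presents Lemma \ref{L2} simply as a summary of the preceding observations (the parameterization $g=\sum_i\lambda_i B_i$ of invariant Hermitian metrics over the irreducible isotropy summands, and the closedness criterion quoted from \cite{AD}). Your explicit Schur's-lemma justification that the $F_i$ span the invariant $(1,1)$-forms and that the metrics correspond to the strictly positive coefficients is exactly the content the authors leave implicit.
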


Now we prove Theorem \ref{maintheorem}.

\begin{proof}[Proof of Theorem $\ref{maintheorem}$]Suppose that the span $C$ of the characteristic classes $\omega_i$ does not intersect the K\"ahler cone. Without loss of generality assume that $\omega_i$ are invariant. Then clearly $C$ does not intersect the cone of invariant Hermitian metrics either. From \ref{toric2} this cone is given by the "first octant" - all $\lambda_{\alpha}$'s are positive. Clearly a hyperplane does not intersect its closure if and only if its normal vector is in it. Then from Lemma \ref{L2} there is an element $F'=\sum_{\alpha\in{\mathfrak m}^+} -i\lambda_{\alpha}E_{\alpha}^*\wedge E_{-\alpha}^*$ of the Hermitian cone, which is orthogonal with respect to a metric of type (\ref{toric2}) above with all $\lambda_{\alpha}=1$, to all $\omega_i$. This is equivalent to all $\omega_i$ being traceless with respect to $F=\sum_{\alpha\in{\mathfrak m}^+} -i\sqrt{\lambda_{\alpha}}E_{\alpha}^*\wedge E_{-\alpha}^*$. This $F$ is a fundamental form of a balanced metric $g$ on $G/K$ by Lemma \ref{LO}, which by the torus bundle construction gives rise to a balanced metric on $G/H$. Then by Lemma \ref{L1} we have a balanced metric on $M$.

In the opposite direction, if there is such a class, it defines a non-negative and non-zero form $\alpha$ which pulls back to an exact form on $G/H$. We can also see that its pullback to $M$ is exact too. But  as in \cite{Michelson}, $0 = \int_M \pi^*(\alpha)\wedge F^{n-1}>0$ for any positive $F$ on $M$ with $dF^{n-1}=0$. So such $F$ does not exist.
\end{proof}

Below we provide an example of a complex compact homogeneous Wang's C-space carrying both balanced and astheno-K\"ahler metrics.
In particular this gives an example of compact simply connected non-K\"ahler complex manifold admitting balanced and astheno-K\"ahler metrics, but no SKT metric. More precisely we have:

\begin{prop}
The homogeneous space $SU(5)/T^2$ for appropriate action of $T^2$ is simply connected and has an invariant complex structure which admits both balanced and astheno-K\"ahler metrics, but  doesn't admit any SKT metric.
\end{prop}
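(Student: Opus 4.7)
My plan is to realize $SU(5)/T^2$, for an appropriate embedding $T^2 \hookrightarrow SU(5)$, as a principal $T^2$-bundle over the full flag manifold $SU(5)/T^4$, deploy Theorem \ref{maintheorem} and Proposition \ref{propnoAK} to produce the balanced and astheno-K\"ahler metrics, and rule out SKT metrics via the classification of SKT Wang C-spaces proved in the last section.

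\emph{Setup.} I would take $T^2$ to be a rank-$2$ subtorus of a maximal torus $T^4$ of $SU(5)$ for which no root of $\mathfrak{su}(5)^{\mathbb C}$ vanishes on $\mathfrak t^{2,\mathbb C}$, so that the centralizer of $\mathfrak t^{2,\mathbb C}$ in $\mathfrak{su}(5)^{\mathbb C}$ is $\mathfrak t^{4,\mathbb C}$. Under Wang's construction, $T^2$ then gets attached to a Borel subalgebra containing $\mathfrak t^{4,\mathbb C}$, and the Tits fibration becomes $\pi\colon SU(5)/T^2\to SU(5)/T^4$ with fiber $T^4/T^2\cong T^2$, equipping $SU(5)/T^2$ with an $SU(5)$-invariant complex structure of the form considered in Section 3. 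Simple connectedness follows from the homotopy exact sequence of $T^2\to SU(5)\to SU(5)/T^2$, since $SU(5)$ is simply connected and $T^2$ is connected. The two characteristic classes $\omega_1,\omega_2\in H^{1,1}(SU(5)/T^4,\mathbb R)\cong\mathbb R^4$ of $\pi$ are invariant $(1,1)$-forms, and their $2$-plane sweeps out every generic $2$-plane in $\mathbb R^4$ as the embedding $\mathfrak t^2\hookrightarrow\mathfrak t^4$ is varied within the open locus of generic embeddings.

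\emph{The two metrics.} I would choose the embedding so that (a) $\mathrm{Span}(\omega_1,\omega_2)$ avoids the closure of the K\"ahler cone of $SU(5)/T^4$ (the closed dominant Weyl chamber), and (b) for some invariant K\"ahler form $F$ on $SU(5)/T^4$ one has $(\omega_1^2+\omega_2^2)\wedge F^{8}=0$ as a top-degree form on $SU(5)/T^4$ (here $n=\dim_{\mathbb C} SU(5)/T^2=11$, so $n-3=8$). Condition (a) yields a balanced metric on $SU(5)/T^2$ by Theorem \ref{maintheorem}, and condition (b) yields an astheno-K\"ahler metric by Proposition \ref{propnoAK} with $k=n-2=9$. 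Condition (a) is an open linear condition on the $2$-plane, while condition (b), by invariance of the top form, reduces to a single linear equation in the positive K\"ahler parameters $\lambda_\alpha>0$ of \eqref{toric2}. The existence of an appropriate $T^2$ then becomes a dimension count: for generic $F$ the equation defining (b) cuts out a codimension-$1$ subvariety in the Grassmannian of $2$-planes, which meets the open locus determined by (a), and a $T^2$ in the intersection together with the associated $F$ delivers both metrics simultaneously.

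\emph{Non-existence of SKT.} This is a direct application of the last section's classification: every Wang C-space admitting an SKT metric is covered by a product of a compact Lie group with a generalized flag manifold. Since $SU(5)/T^2$ is simply connected, such a cover would be $SU(5)/T^2$ itself, and since $SU(5)$ is simple while for a generic $T^2$ the space $SU(5)/T^2$ admits no nontrivial global product decomposition, no SKT structure can exist on it. The main obstacle of the proof is the simultaneous realization of (a) and (b): verifying (a) is linear algebra in the weight lattice, but verifying (b) is a concrete computation in the Borel presentation $H^*(SU(5)/T^4)=\mathbb R[x_1,\ldots,x_5]/(\sigma_1,\ldots,\sigma_5)$, and exhibiting an explicit $T^2$ and an $F$ in the K\"ahler cone for which the resulting linear equation holds is the technical heart of the proof.
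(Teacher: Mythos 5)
Your overall architecture matches the paper's: realize $SU(5)/T^2$ as a principal $T^2$-bundle over the flag $SU(5)/T^4$, get the balanced metric from Theorem \ref{maintheorem}, the astheno-K\"ahler metric from Proposition \ref{propnoAK} with $n=11$, and simple connectedness from the homotopy sequence. But there is a genuine gap at exactly the point you flag as ``the technical heart'': the existence of characteristic classes satisfying $(\omega_1^2+\omega_2^2)\wedge F^{8}=0$ is not established by your dimension count, and the count as stated is not even the right picture. Since $\omega\mapsto\omega^2\wedge F^{8}$ is (by the Hodge index theorem on the $10$-dimensional K\"ahler base) a quadratic form of signature $(1,3)$ on $H^{1,1}(SU(5)/T^4)\cong\mathbb{R}^4$, a $2$-plane admits a basis with $q(\omega_1)+q(\omega_2)=0$ if and only if $q$ restricted to that plane is indefinite or degenerate --- an \emph{open} condition, not a codimension-one one, and one that fails for any plane lying in the negative cone. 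What must be produced is a plane that meets the positive cone of $q$ while still avoiding the closed K\"ahler cone (needed both for Theorem \ref{maintheorem} and to preclude K\"ahlerness), and whose integral generators can be rescaled to kill the sum of squares. The paper does precisely this by computing, in the Borel presentation of $H^2(SU(5)/T^4)$ via the forms $\alpha_{ij}$, explicit traceless classes $F_1=\omega_1+\omega_2-\omega_3-\omega_4$ and $F_2=3\omega_1-\omega_2-\omega_3-\omega_4$ and an explicit K\"ahler form $\Omega$ with $F_1^2\wedge\Omega^8>0$ and $F_2^2\wedge\Omega^8<0$. Without some such computation your proposal does not prove the astheno-K\"ahler half of the statement.

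Your route to the non-existence of SKT metrics also differs from the paper's and has an unjustified step. The paper does not invoke Theorem \ref{SKTWangCspaces} here; it computes $h^{3,0}(SU(5)/T^2)=h^{2,1}(SU(5)/T^2)=0$ via the Tanr\'e model for the Dolbeault cohomology of torus bundles, applies Cavalcanti's obstruction to conclude an SKT metric would force a symplectic structure, and rules that out because every class in $H^2(SU(5)/T^2)$ is pulled back from the base and hence cannot have maximal rank. Using Theorem \ref{SKTWangCspaces} instead is logically admissible (its proof does not depend on this Proposition), but you must then exclude the two surviving possibilities: that $SU(5)/T^2$ \emph{is} a generalized flag manifold (i.e.\ K\"ahler --- the trivial group factor), and that it splits as $K\times Q$ with $K$ a nontrivial simply connected compact Lie group. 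Neither is ruled out by ``$SU(5)$ is simple''; the first needs the non-K\"ahler/non-symplectic argument above, and the second needs a cohomological obstruction (e.g.\ comparing $b_3$ or the Hodge numbers of a product $K\times Q$ with those of $SU(5)/T^2$). As written, ``admits no nontrivial global product decomposition'' is an assertion, not a proof.
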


\begin{proof}
Consider the flag manifold $SU(5)/T^4$. Then a reductive decomposition for  this homogeneous space  is $\frak{su}(5) = {\frak t}  \oplus {\frak m}$,
where $\frak t$ is the subspace of traceless diagonal matrices with imaginary entries and $\frak m$ is the subspace of skew-Hermitian matrices with vanishing diagonal entries. After complexification, a standard choice of simple roots is the following: let $a_{ij}$ be the basis of  $\frak{ gl}(5, \C)^* $ dual to the standard one in $\frak{gl}(5,\C)$ containing matrices with 1 at $(i,j)$th place and all other entries 0. Then a set of positive roots for $\frak{sl}(5, \C)$ is $a_{ij}$ for $i<j$ and $e_{i,i+1} = a_{ii}-a_{i+1i+1}$,  for $1\leq i\leq 4$. According to the standard theory (see e.g. \cite{BH}),  the forms $de_{i,i+1}$, $1\geq i \geq 4$  form a basis of $H^2(SU(5)/T^4)$.

 Moreover, the complex structure on ${\frak m}^{\C}$ is given by $I(a_{ij})=ia_{ij}$ for $i<j$ and $I(a_{ij})=-ia_{ij}$ for $i>j$. Denote by $\alpha_{ij}$ the $2$-form $\frac{i}{2}a_{ij}\wedge a_{ji}$. For $i<j$ these are (1,1)-forms and the K\"ahler form for the bi-invariant metric on $SU(5)/T^{4}$ is  given by $\sum_{i<j} \alpha_{ij}$.

Note   that $[A_{ij},A_{ji}]=A_{ii}-A_{jj}$, so $$de_{i,i+1}(A_{jk},A_{kj})= -  e_{i,i+1}([A_{jk},A_{kj}]) =  -(\delta_i^j  -\delta_i^k) + (\delta_{i+1}^j-\delta_{i+1}^k).$$
Therefore, by computing $d e_{i, i+1}$, $1 \geq i \geq 4$,  we get that   the following four 2-forms:
$$
\begin{array}{l}
\omega_1 = 2\alpha_{12}+\alpha_{13}+\alpha_{14}+\alpha_{15}-\alpha_{23}-\alpha_{24}-\alpha_{25},\\[3pt]
\omega_2= -\alpha_{12}+2\alpha_{23}+\alpha_{24}+\alpha_{25}-\alpha_{34}-\alpha_{35}+\alpha_{13},\\[3pt]
\omega_3 = -\alpha_{13}-\alpha_{23}+2 \alpha_{34}+\alpha_{14}+\alpha_{24}+\alpha_{35}-\alpha_{45},\\[3pt]
\omega_4 = \alpha_{15}+\alpha_{25}+\alpha_{35}+2\alpha_{45}-\alpha_{14}-\alpha_{24}-\alpha_{34}
\end{array}
$$
form a basis of $H^2(SU(5)/T^4)$.

We  have  that $\omega_1+\omega_2+\omega_3+\omega_4$ is  weakly positive definite with 3 zero directions and  $3\omega_1+5\omega_2+6\omega_3+6 \omega_4$ is strictly positive. Now with respect to the bi-invariant metric (which is  balanced),  the traces of $\omega_i$ are all equal to   2. In view of Proposition \ref{propnoAK} in order to show the existence of an  astheno-K\"ahler  metric on $SU(5)/T^2$ we need to find two traceless classes. If we consider   $F_1=\omega_1+\omega_2-\omega_3-\omega_4$,  $F_2=3\omega_1-\omega_2-\omega_3-\omega_4$ and  the strictly  positive $2$-form $\Omega=3\omega_1+5\omega_2+6\omega_3+6 \omega_4 + 10 (\omega_1+\omega_2+\omega_3+\omega_4)$, we get  that $F_1^2\wedge \Omega^8 >0$ and $F_2^2\wedge\Omega^8 <0$. It is then sufficient  to change  either $F_1$ or $F_2$ by a constant to have them satisfying the condition $(F_1^2+F_2^2)\wedge\Omega^8=0$. Furthermore $SU(5)/T^2$ has a balanced metric (here we can for instance apply Theorem \ref{maintheorem}).  Moreover, it does not admit any K\"ahler structure.

Note that the forms  $\frac{1}{2\pi i}F_1$ and $\frac{1}{2\pi i}F_2$ from above define integer classes and a lot of information about the topology of the space $SU(5)/T^2$ as a principal torus bundle over the flag $SU(5)/T^4$ can be obtained. In particular it is simply connected and with non-vanishing first Chern class.

 Using the obstruction found by Cavalcanti  \cite[Theorem 5.16]{Cavalcanti} we can now show that $SU(5)/T^2$ does not admit any  SKT metric, since it can  not  have   symplectic forms. Indeed, it is possible to prove that $h^{3,0}  (SU(5)/T^2)$ and  $h^{2,1} (SU(5)/T^2)$ both vanish. To calculate the Hodge numbers we use a refinement of the Borel's spectral sequence in \cite{Tanre} which provides an explicit model for the Dolbeault cohomology of principal torus bundles. Recall that a Dolbeault model of a compact complex manifold $M$ is  a morphism $\phi: (\Lambda V, \delta) \rightarrow (\Omega^c(M),\overline{\partial})$ from a commutative differential bi-graded algebra $V=\oplus_{p,q}V^{p,q}, \delta(V^{p,q})\subset V^{p,q+1}$,  to the Dolbeaut complex of $M$ which preserves the grading and induces an  isomorphism on the cohomology. Then Proposition 8 of  \cite{Tanre} describes the Dolbeaut model of a torus bundle $M$ over a K\"ahler base $B$ with $H^2(B, \mathbb{C})\equiv H^{1,1}(B)$ in terms of the de Rham model of $B$. It follows that the  Dolbeault cohomology $H^{3,0} (SU(5)/T^2)$ vanishes, because the flag  manifold $SU(5)/T^4$ has no (2,0)-cohomology. Also any element $ \alpha \in H^{2,1}$ has the form $\alpha = \beta\wedge \omega$ with $\beta$ being the (1,0)-form   and $\overline{\partial}\beta = F_1+iF_2$ and $\omega = \sum_i a_i\gamma_i$. Then $\overline{\partial}\alpha = (F_1+iF_2)\wedge (\sum_i a_i\gamma_i)$, for some generator $\gamma_i$ such that   $\sum_i\gamma_i^2+\sum_{i<j}\gamma_i\wedge\gamma_j = 0$. We can check that the quadratic form $\sum_ix_i^2+\sum_{i<j}x_ix_j$ has maximal rank. However $(F_1+iF_2)\wedge (\sum_i a_i\gamma_i)$ corresponds to a quadratic form of lower rank. So they can not be proportional. This shows that $h^{2,1}=h^{3,0}=0$. On the other side, every class in $H^2(SU(5)/T^2)$ is a pullback of a class on the base flag manifold, so it can not have a maximal rank. Hence $SU(5)/T^2$ doesn't admit a symplectic structure (this is consistent with \cite{Guan2}).
\end{proof}

No general result is known for the existence of  SKT metrics on complex homogeneous spaces.  Complex parallelizable manifolds cannot admit SKT metrics, by using the   argument as in \cite{DLV}  for the invariant case and the symmetrization process in \cite{Belgun,FG,Ugarte}. Indeed,    the existence   of an SKT metric, implies the existence of a   unitary coframe of  invariant  $(1,0)$-forms  $\{ \zeta_i \} $ such that  $\overline \partial  \zeta_i =0,$ so we can suppose that the fundamental form of the  SKT metric  is given by  $\frac {i}{2} \sum_i \zeta_i \wedge \overline \zeta_i$,  which  cannot be $d d^c$-closed.  Moreover, by \cite{Matsuo} a complex parallelizable  manifold cannot  admit any astheno-K\"ahler metric (unless the space is a complex torus).
In the next section, we generalize the arguments above so as to
characterize SKT Wang C-spaces.

 %We expect that, as  in  the previous example, many non-K\"ahler C-spaces  have $h^{2,1}= h^{3,0} =0$ and so we can apply the obstruction by %Cavalcanti in \cite{Cavalcanti}. This is a work in progress which we hope to be able to report in the near future.

\section{SKT metrics on   non-K\"ahler C-spaces}

Recall that a Wang C-space (or non-K\"ahler C-space) is a compact complex manifold admitting a transitive action by a compact Lie group of biholomorphisms and finite fundamental group. According to Wang \cite{Wa}, such a space  admits a transitive action of a compact semisimple Lie group. The aim of this section is to prove the following

\begin{teo}\label{SKTWangCspaces}
Every SKT Wang C-space is (up to a finite cover)  the product of a compact Lie group and a generalized flag manifold.
\end{teo}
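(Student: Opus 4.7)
The plan is to reduce the SKT condition to an identity on the K\"ahler base of the Tits fibration, extract a cohomological obstruction, and then turn it into the desired product structure. First I would symmetrize: averaging an SKT fundamental form on $M = G/H$ (with $G$ compact semisimple acting by biholomorphisms) over $G$ yields a $G$-invariant SKT form, since averaging preserves pointwise positivity and Hermitian type, and the action commutes with $dd^c$. This is the same symmetrization argument cited in the discussion of complex parallelizable manifolds at the end of Section~5. Invoking Wang's structure theorem, the Tits fibration $\pi\colon M \to G/K$ has base a generalized flag manifold (hence K\"ahler) and fiber $K/H$ a compact complex torus of complex dimension $k$, and the invariant Hermitian form may be written
\[
F_M = \pi^*F_B + \sum_{l=1}^{k}\theta_{2l-1}\wedge \theta_{2l},
\]
with $F_B$ invariant Hermitian on $G/K$ and $d\theta_j = \pi^*\omega_j$ for the invariant characteristic $(1,1)$-classes $\omega_j\in H^{1,1}(G/K;\mathbb{R})$ of the bundle.

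Next, a direct adaptation of the computation in Proposition~\ref{propnoAK} (replacing the K\"ahler base by a general Hermitian one) gives
\[
dd^cF_M = \pi^*\Bigl(dd^cF_B + \sum_{l=1}^{k}(\omega_{2l-1}^2 + \omega_{2l}^2)\Bigr),
\]
so SKT is equivalent to the base-equation $dd^cF_B = -\sum_l(\omega_{2l-1}^2 + \omega_{2l}^2)$. Wedging with $\Omega^{n_B-2}$ for any invariant K\"ahler form $\Omega$ on $G/K$ (where $n_B = \dim_{\mathbb{C}}(G/K)$) and applying Stokes (using that $\Omega^{n_B-2}$ is closed) kills the left-hand side, so
\[
\int_{G/K} \sum_{l=1}^{k}(\omega_{2l-1}^2 + \omega_{2l}^2)\wedge \Omega^{n_B-2} = 0\qquad \text{for every K\"ahler }\Omega.
\]
Since this expression is polynomial in the coefficients of $\Omega$ and vanishes on the open K\"ahler cone, polarization together with the fact that $H^*(G/K;\mathbb{R})$ is generated by $H^2$ (Borel's theorem) yields the cohomological identity $\sum_{l=1}^k([\omega_{2l-1}]^2 + [\omega_{2l}]^2) = 0$ in $H^4(G/K;\mathbb{R})$. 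Setting $\zeta_l = [\omega_{2l-1}] + i[\omega_{2l}]$ rewrites this as $\sum_l \zeta_l\cdot\overline{\zeta_l} = 0$; by Hodge--Riemann, the associated Hermitian pairing $(\alpha,\beta)\mapsto \int \alpha\wedge\overline{\beta}\wedge \Omega^{n_B-2}$ on $H^{1,1}(G/K;\mathbb{C})$ has signature $(1,h^{1,1}(G/K)-1)$, with unique positive direction along $[\Omega]$, so this vanishing is highly restrictive.

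The main obstacle is the last structural step. Writing $G=G_1\times\cdots\times G_s$ as a product of simple factors, the Tits fibration splits accordingly, and matters reduce to $G$ simple. For simple $G$, a genuine product $M = L\times F$ with $L$ a positive-dimensional compact Lie group and $F$ a non-trivial flag would force a splitting of $G$, which is impossible; hence either $F$ is a point (so $M \simeq G$ up to finite cover, with its Samelson complex structure) or $L$ is trivial (so $M$ is a K\"ahler flag, excluded because $M$ is a Wang C-space). The theorem therefore amounts to showing that for $G$ simple and $K\supsetneq T$ a proper parabolic subgroup, the characteristic classes of any Tits fibration $G/H\to G/K$ never satisfy $\sum [\omega_i]^2 = 0$. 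I would prove this by combining Borel's coinvariant presentation of $H^*(G/K;\mathbb{R})$ as a coinvariant algebra of $(W_G,W_K)$ with the explicit Lie-theoretic description of the characteristic-class map $\mathfrak{k}/\mathfrak{h}\hookrightarrow H^2(G/K;\mathbb{R})$, and with Hodge--Riemann applied along a K\"ahler class chosen orthogonal in top degree to the span of the $\zeta_l$'s (exploiting negative definiteness of the Hodge--Riemann form on primitive $(1,1)$-classes). Carrying out this Lie-theoretic case analysis so as to exclude every possibility except $K=T$ with $H$ finite is where I expect the technical heart of the argument to concentrate.
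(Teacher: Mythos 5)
Your first half is a genuinely different, and essentially sound, route to the key algebraic fact. The paper reaches the quadratic relation among the characteristic classes indirectly: it shows a non-K\"ahler C-space has no symplectic structure (Borel plus averaging), invokes Cavalcanti's obstruction together with $h^{3,0}=0$ (Lemma \ref{h30-vanish}) to force $h^{2,1}(M)\neq 0$, and then reads off a quadratic relation $Q(\omega_1,\dots,\omega_k)=0$ from Tanr\'e's Dolbeault model. You instead average the SKT form to an invariant one, compute $dd^cF_M$ on the Tits fibration as in Proposition \ref{propnoAK}, and extract the specific relation $\sum_l([\omega_{2l-1}]^2+[\omega_{2l}]^2)=0$ in $H^4(G/K)$ by Stokes, polarization over the K\"ahler cone, and Poincar\'e duality (using that $H^*(G/K)$ is generated in degree two). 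Modulo routine care about putting the invariant metric in the adapted form $\pi^*F_B+\sum\theta_{2l-1}\wedge\theta_{2l}$ with $(1,1)$ curvatures, this works and avoids both Cavalcanti's theorem and the Dolbeault model.

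However, the second half contains a genuine gap in two places. First, the reduction ``the Tits fibration splits accordingly, and matters reduce to $G$ simple'' is false as stated: for $G=G_1\times\cdots\times G_s$ only the \emph{base} $G/K$ splits as a product of flags (Borel); the torus bundle and its characteristic classes can mix the simple factors, as the paper's own example $S^3\times S^3$ with a non-product complex structure shows. This is exactly what Lemma \ref{h21-vanish} is for: one must decompose the quadratic relation as $Q=c_1p_1+\cdots+c_kp_k$ over the factors and argue factor by factor. Second, and more importantly, the step you yourself flag as ``the technical heart'' --- showing that $\sum[\omega_i]^2=0$ forces $K=T$ with $H$ discrete on the relevant factors --- is not carried out, and the tools you propose (Hodge--Riemann along a well-chosen K\"ahler class) are not sufficient: the relation $\sum\zeta_l\overline{\zeta_l}=0$ only constrains sums of ``norms'' and does not by itself rule out cancellation between classes of positive and negative square. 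The missing ingredient is Chevalley's theorem: for $G$ simple the ideal of relations in $H^*(G/T)$ contains, in degree four, only multiples of the single $W_G$-invariant quadratic polynomial $p$ coming from the Killing form, which has \emph{maximal rank} equal to $\mathrm{rank}(G)$. Hence any nontrivial quadratic relation among the characteristic classes must be a nonzero multiple of $p$, which is impossible if those classes span a proper subspace of $H^2(G/T)$ (a form in fewer variables cannot have full rank), and is likewise incompatible with $K\supsetneq T$ by the Bernstein--Gelfand--Gelfand injectivity of $\pi^*:H^*(G/K)\to H^*(G/T)$ onto the $W_K$-invariants. Without this nondegeneracy argument your case analysis has no engine, so as written the proposal establishes the obstruction but not the structure theorem.
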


Before giving the proof  of the theorem we need some preliminary lemmas.

\begin{lemm}\label{h30-vanish}
Let $M=G/H$ be a Wang C-space. Then  $h^{3,0} (M)=0.$
\end{lemm}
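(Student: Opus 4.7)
My plan is to compute $h^{3,0}(M)$ via the explicit finite-dimensional Dolbeault model for the Tits fibration already used in the $SU(5)/T^2$ example. By Wang's structure theorem, $M=G/H$ is a principal holomorphic torus bundle $\pi\colon M\to G/K$ over a generalized flag manifold, with fiber a complex torus of some complex dimension $k$ and with characteristic classes $[c_1],\dots,[c_k]$ of type $(1,1)$. Since the flag manifold $G/K$ is K\"ahler with $H^*(G/K;\mathbb{C})=\bigoplus_p H^{p,p}(G/K)$, Proposition 8 of \cite{Tanre} supplies a Dolbeault model of the shape
\begin{equation*}
\bigl(H^{*,*}(G/K)\otimes\Lambda(\beta_1,\dots,\beta_k,\bar\beta_1,\dots,\bar\beta_k),\,\bar\partial\bigr),
\end{equation*}
with $\beta_j$ of bidegree $(1,0)$, $\bar\beta_j$ of bidegree $(0,1)$, $\bar\partial\beta_j=[c_j]$, and $\bar\partial\bar\beta_j=0$.

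I would then extract the $(3,0)$-component. A monomial of total bidegree $(3,0)$ contains no $\bar\beta_j$'s and must be of the form $\eta\wedge\beta_{i_1}\wedge\cdots\wedge\beta_{i_r}$ with $\eta\in H^{3-r,0}(G/K)$. The Hodge decomposition of $G/K$ forces $H^{p,0}(G/K)=0$ for all $p\geq 1$, so only the term with $r=3$ survives, and the $(3,0)$-cochains reduce to $\Lambda^3 V$ with $V=\mathrm{span}(\beta_1,\dots,\beta_k)$. A direct Leibniz computation yields
\begin{equation*}
\bar\partial(\beta_i\wedge\beta_j\wedge\beta_l) = [c_i]\wedge\beta_j\wedge\beta_l - [c_j]\wedge\beta_i\wedge\beta_l + [c_l]\wedge\beta_i\wedge\beta_j,
\end{equation*}
so that for $\omega=\sum_{i<j<l}a_{ijl}\,\beta_i\wedge\beta_j\wedge\beta_l$ the closure condition $\bar\partial\omega=0$ is equivalent to $\sum_i a_{ijl}[c_i]=0$ in $H^{1,1}(G/K)$ for every pair $(j,l)$.

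The remaining step is to show that the classes $[c_1],\dots,[c_k]$ are linearly independent in $H^{1,1}(G/K)$. Writing each complex characteristic class as $c_j=c^{(1)}_j+ic^{(2)}_j$, the homotopy long exact sequence of the fibration $T^{2k}\to M\to G/K$, together with $\pi_1(G/K)=0$ (the flag is simply connected), identifies $\pi_1(M)$ with the cokernel of the transgression $\pi_2(G/K)\to\pi_1(T^{2k})=\mathbb{Z}^{2k}$, which is dual to the pairing with the $2k$ real characteristic classes $c^{(1)}_j,c^{(2)}_j$. The defining finiteness of $\pi_1(M)$ for a Wang C-space then forces this transgression to have maximal rank, so the $2k$ real classes are linearly independent in $H^2(G/K;\mathbb{Q})$, whence the $k$ complex classes are linearly independent in $H^{1,1}(G/K;\mathbb{C})$. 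Combined with the closure condition of the previous paragraph, this forces $a_{ijl}=0$ for all indices, and therefore $h^{3,0}(M)=0$. The main subtlety is the careful verification that Tanr\'e's model applies to the Tits fibration and the passage from finiteness of $\pi_1(M)$ to linear independence of the characteristic classes; both are essentially standard but require careful bookkeeping.
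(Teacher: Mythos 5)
Your proof is correct and follows essentially the same route as the paper: both reduce the computation to Tanr\'e's Dolbeault model for the Tits fibration $M\to G/K$ and use that $H^{p,0}(G/K)=0$ for $p\geq 1$ because all cohomology of a generalized flag manifold is of type $(p,p)$. The only difference is that you explicitly supply the linear independence of the characteristic classes (via finiteness of $\pi_1(M)$ and the homotopy exact sequence), a step that is genuinely needed when the fiber has complex dimension at least $3$ but which the paper's one-line proof leaves implicit.
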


\begin{proof} The Hodge numbers  of $M$ can be computed by using the Tanre model \cite{Tanre} for the Dolbeault cohomology of principal torus bundles. As in the proof of Proposition 5.1 we use the Tits fibration  $G/H \rightarrow  G/K$ and the fact that $h^{2,0} (G/K)$  vanishes. Indeed, by  \cite[14.10]{BH}  $G/K$ is a
rational projective algebraic manifold over $\mathbb C$ all of whose cohomology is of Hodge type $(p, p).$ Now the Lemma follows from \cite{Tanre}, Proposition 8.
\end{proof}

\begin{lemm}
 Let $M=G/H$ be a Wang C-space with $G$ simple. Then $h^{2,1}  (M)= 0 $ unless $H$ is discrete.
\end{lemm}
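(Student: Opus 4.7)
The plan is to apply Tanré's model \cite{Tanre} (Proposition~8 therein) for the Dolbeault cohomology of the Tits fibration $\pi\colon G/H\to G/K$, extending the argument of Lemma~\ref{h30-vanish}. The model is the commutative bigraded differential algebra $(H^{*,*}(G/K)\otimes\Lambda(\eta_1,\dots,\eta_k,\bar\eta_1,\dots,\bar\eta_k),\bar\partial)$ with $k=\dim_\C(K/H)$, $\bar\partial\eta_i=\omega_i^c\in H^{1,1}(G/K)$ the complex characteristic classes, and $\bar\partial\bar\eta_i=0$. Since $G/K$ is rational projective algebraic, $H^{p,q}(G/K)=0$ for $p\neq q$, and so the $(2,1)$-cochains split into two types: (a)~$\sum_i\eta_i\wedge\alpha_i$ with $\alpha_i\in H^{1,1}(G/K)$, and (b)~$\sum_{i<j,l}c_{ij,l}\,\eta_i\wedge\eta_j\wedge\bar\eta_l$ with constant coefficients. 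The $\C$-linear independence of the $\omega_i^c$'s (obtained by complexifying the $2k$ $\R$-independent real characteristic classes) forces the closedness condition on type~(b) to annihilate all $c_{ij,l}$.

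For type~(a), closedness reads $\sum_i\omega_i^c\wedge\alpha_i=0$ in $H^{2,2}(G/K)$, while the exact forms $\bar\partial(\eta_i\wedge\eta_j)=\omega_i^c\eta_j-\omega_j^c\eta_i$ from bidegree $(2,0)$ produce the trivial Koszul syzygies. Hence $H^{2,1}(G/H)$ equals the middle cohomology of the Koszul-type complex
\[
\Lambda^2\C^k\;\xrightarrow{\,e_i\wedge e_j\mapsto\omega_i^c e_j-\omega_j^c e_i\,}\;\C^k\otimes H^{1,1}(G/K)\;\xrightarrow{\,(\alpha_i)\mapsto\sum_i\omega_i^c\alpha_i\,}\;H^{2,2}(G/K).
\]
The required exactness is established via the Borel presentation $H^*(G/K,\C)=S(\mathfrak{t}_\C^*)^{W_K}/(S(\mathfrak{t}_\C^*)^{W_G}_+)$: since $G$ is simple, the relations in $H^4(G/K)$ are spanned by the Killing form $B_G$, which under the orthogonal splitting $\mathfrak{t}=\mathfrak{t}_{ss}\oplus Z(\mathfrak{k})$ decomposes as $(B_G|_V,\,B_G|_{\mathfrak{t}_{ss}^*})\in S^2V\oplus P_{ss}$, where $V=Z(\mathfrak{k}_\C)^*=H^{1,1}(G/K)$ and $P_{ss}=(S^2\mathfrak{t}_{ss}^*)^{W_{K_{ss}}}$.

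A non-trivial syzygy $(\alpha_i)$ would then have to satisfy $\sum_i\omega_i^c\alpha_i\equiv\lambda(B_G|_V,\,B_G|_{\mathfrak{t}_{ss}^*})$ with $\lambda\neq 0$. When $K$ strictly contains the maximal torus (so $K_{ss}\neq 0$), the nondegenerate $B_G|_{\mathfrak{t}_{ss}^*}$ in the $P_{ss}$-component forces $\lambda=0$, leaving only trivial syzygies. The main obstacle is the case $K=T$, where $P_{ss}=0$ and the relation reduces to $B_G|_V\in S^2V$; a non-trivial syzygy then exists iff $B_G|_V$ lies in $W\cdot V$ with $W=\mathrm{span}_\C(\omega_i^c)$, equivalently iff the annihilator $W^\perp\subset\mathfrak{t}_\C$ is totally isotropic for $B_G$. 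Since $H$ is not discrete, $\dim\mathfrak{a}\geq 1$, so $k=(\mathrm{rank}\,G-\dim\mathfrak{a})/2<\mathrm{rank}\,G/2$, and hence $\dim W^\perp=\mathrm{rank}\,G-k>\mathrm{rank}\,G/2$ exceeds the maximal isotropic dimension of the nondegenerate Killing form on $\mathfrak{t}_\C$. Hence $B_G|_{W^\perp}\neq 0$, the Koszul complex is exact, and $H^{2,1}(G/H)=0$.
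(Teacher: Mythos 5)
Your proposal is correct and follows the same overall strategy as the paper: reduce $h^{2,1}(G/H)$ via Tanr\'e's model to quadratic syzygies among the characteristic classes, invoke Borel's description of $H^*(G/K,\C)$ together with Chevalley's theorem that for $G$ simple the only $W_G$-invariant quadratic polynomial is the Killing form, and then show the Killing form cannot arise as such a syzygy. Where you diverge is in how the last step is justified, and in both places your version is actually tighter than the paper's. In the case $K=T$ the paper argues that the relation $Q$, being supported on the span of the characteristic classes, has rank less than $\mathrm{rank}\,G$ and so cannot be proportional to the nondegenerate Killing form; you instead observe that $B_G\in(W)$ forces $W^\perp$ to be totally isotropic and kill this by the count $\dim W^\perp=n-k=(n+\dim\mathfrak a)/2>n/2$ --- this is where the hypothesis that $H$ is not discrete enters cleanly, and it avoids the paper's somewhat imprecise claim that ``$Q$ depends on $m<n$ variables'' (the coefficients $\alpha_i$ range over all of $H^{1,1}$, not just over $W$). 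For general $K$ the paper pulls back to $G/T$ via Bernstein--Gelfand--Gelfand \cite{BGG} and reduces to the abelian case, whereas you work directly in the Borel presentation and use the orthogonal splitting $(S^2\mathfrak t_{\C}^*)^{W_K}=S^2V\oplus P_{ss}$: a product of $(1,1)$-classes has no $P_{ss}$-component, while the Killing form does, so the coefficient $\lambda$ must vanish. Two small points you should make explicit: the $\C$-independence of the $\omega_i^c$ (hence the regular-sequence/Koszul exactness) ultimately rests on the finiteness of $\pi_1(G/H)$, and the vanishing of the cross term $V\otimes(\mathfrak t_{ss}^*)^{W_{K_{ss}}}$ uses that $W_{K_{ss}}$ has no invariant linear forms. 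Neither is a gap, just a line each worth adding.
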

\begin{proof} We first consider the case $H$ abelian. In this case $H$ is contained in a maximal torus $T$ and the Tits fibration is $G/H\rightarrow G/T$.  From a classical result  in \cite{Borel2} the cohomology  ring $H^*(G/T, {\mathbb C})$  is  generated by the products of $\omega_i$'s. The only relations among them  are given by $Q_i(\omega_1,...,\omega_n) =0$, where $Q_i[x_1,\cdots,x_n]$ are all polynomials invariant under the Weyl group $W_G$ of $G$ acting on $\mathbb{R}[x_1,\cdots,x_n]$ and the product in $H^*(G/T, {\mathbb C})$  is the wedge product of the corresponding representatives. According to a result by Chevalley \cite{Chevalley}, for a simple Lie group $G$,  there exists up to a constant  only one  $W_G$-invariant quadratic polynomial $p\in {\mathbb C}[x_1,\ldots,x_n]$; $p$ is the polynomial corresponding to the Killing form of the Lie algebra of $G$.
 In particular, $H^4 (G/T, {\mathbb C})$ is isomorphic to the space of homogeneous quadratic polynomials factored by $p$. Moreover $p$ is  negative definite over real numbers and, consequently,  it has a maximal rank over complex numbers.

Assume now that  $h^{2,1}(G/H)\neq 0$ and let $\omega_1,\omega_2,...\omega_n$ be the generators of the space $H^{1,1}(G/T, {\mathbb C})$.  From Tanre's model  it follows that there exists  a quadratic relation  involving the $\omega_i$. Indeed,  elements of $H^{2,1}(G/H, {\mathbb C})$ are of the form $\sum_i \alpha_i\wedge\omega_i$, where $\alpha_i$ are some vertical $(1,0)$-forms.   Let $Q(\omega_1,...,\omega_n) = \sum  q_{ij} \omega_i\omega_j$ as above,  where $Q$ is a quadratic polynomial in ${\mathbb C} [x_1,\ldots, x_n]$.

Since up to a constant  there exists only one  $W_G$-invariant quadratic polynomial $p$ in ${\mathbb C}[x_1,\ldots,x_n]$,  $Q=c p$ for a constant $c$ and $Q$ has a maximal rank too. But if the Tits fibration has positive-dimensional fiber, then $Q$ depends on $y_1,..,y_m$ - variables with $m<n$, where $y_i$'s are linear functions of $x_i$'s. In particular  the  diagonal  form of $Q$  has at most $m$ non-zero entries and  it can not have a maximal rank in ${\mathbb C}[x_1,...,x_n]$. So the Tits fibration has  a discrete fiber and the statement follows.
\vspace{.1in}

Now assume $H$ arbitrary.  Then the base of the Tits fibration $\pi\colon G/H\rightarrow G/K$ is a generalized flag manifold and $K$ contains a maximal torus $T$. A  result  by Bernstein-Gelfand-Gelfand in \cite{BGG} implies that $\pi^*: H^*(G/K, {\mathbb C} )\rightarrow H^*(G/T, {\mathbb C})$ is injective. Moreover, the image of $\pi^*$ consists in   the $W_K$-invariant elements of $H^*(G/T, {\mathbb C})$. So,  if $\omega_1,...,\omega_k$ are the characteristic classes of the Tits fibration, then $\pi^*(\omega_i)$ belongs to $H^{1,1}(G/T,{\mathbb C})$ and if $Q=\sum q_{ij} \omega_i\omega_j = 0$, then $\pi^*(Q)=0$ in $H^{1,1}(G/T,{\mathbb C})$. But then there will be additional quadratic  relation among the set of the $W_K$-invariant polynomials in ${\mathbb C}[x_1,...x_n]$. When $W_K$ is non-trivial this is impossible. So, $H$ has to be  either abelian or discrete and, from the first part of the proof, it has to be discrete.
\end{proof}

\begin{lemm}\label{h21-vanish}
 Let $M=G/H$ be a Wang C-space as above, but with $G$ semisimple. Then $h^{2,1}(M)\neq 0$ only if $M$ has a finite cover which is  biholomorphic to a product of a compact even-dimensional Lie group and another Wang C-space.

\end{lemm}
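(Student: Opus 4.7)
The plan is to extend the strategy of the previous lemma (the simple case) to the semisimple setting; the new feature is that the space of quadratic $W_G$-invariants has dimension equal to the number of simple factors of $G$, rather than being one-dimensional.

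First, I would pass to a finite cover so that $G = G_1 \times \cdots \times G_k$ with each $G_j$ compact, simple, and simply connected, with maximal torus $T = T_1 \times \cdots \times T_k$. Using the Bernstein-Gelfand-Gelfand injection $H^*(G/K, \mathbb{C}) \hookrightarrow H^*(G/T, \mathbb{C})$, exactly as in the preceding proof of the simple case, it suffices to analyze the induced torus fibration $G/H^0 \to G/T$, reducing to the situation where the Tits fiber sits abelianly inside $T$.

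Next, I would combine Chevalley's theorem with the product structure of $G$: the space of $W_G = W_{G_1} \times \cdots \times W_{G_k}$-invariant quadratic polynomials in $\mathrm{Sym}^2(\mathfrak{t}^*)$ is spanned by the Killing forms $p_1, \ldots, p_k$ of the simple factors, with $p_j$ non-degenerate on $H^2(G_j/T_j, \mathbb{C})$ and vanishing on the other summands of $H^2(G/T, \mathbb{C}) = \bigoplus_i H^2(G_i/T_i, \mathbb{C})$. The hypothesis $h^{2,1}(M) \neq 0$ together with Tanr\'e's model produces, as in the simple case, a nontrivial quadratic relation $\sum_{a,b} c_{ab}\, \chi_a\, \omega_b = \sum_{j=1}^{k} \mu_j\, p_j$ in $H^4(G/T, \mathbb{C})$, where the $\chi_a$ are the characteristic classes of the Tits fibration $G/H^0 \to G/T$ and the scalars $\mu_j$ are not all zero. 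Fixing $j_0$ with $\mu_{j_0} \neq 0$ and projecting this identity onto the direct summand $H^4(G_{j_0}/T_{j_0}, \mathbb{C})$ isolates $\mu_{j_0} p_{j_0}$ on the right; the non-degeneracy argument from the simple case then forces the $j_0$-components $\chi_a^{(j_0)}$ to span all of $H^2(G_{j_0}/T_{j_0}, \mathbb{C})$, which is equivalent to the vanishing $\mathfrak{h}^0 \cap \mathfrak{t}_{j_0} = 0$.

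The main step, and the principal obstacle, is to upgrade this infinitesimal vanishing into an honest geometric product decomposition. Combining $\mathfrak{h}^0 \cap \mathfrak{t}_{j_0} = 0$ with the subtorus structure of $H^0 \subset T$, one sees that the projection $\rho_{j_0}\colon G = G_{j_0} \times G' \to G_{j_0}$ restricts to a local diffeomorphism on $H^0$, where $G' = \prod_{i \neq j_0} G_i$. After a further finite covering chosen to absorb both the component group $H/H^0$ and the kernel of $\rho_{j_0}|_H$, one obtains a biholomorphism $M \simeq G_{j_0} \times (G'/H')$, where $H' \subset G'$ is the image of $H$ under the other projection. The first factor is a compact Lie group of even real dimension, since it inherits a left-invariant complex structure from that on $G$, while $G'/H'$ is a Wang C-space of strictly smaller dimension, yielding the desired decomposition.
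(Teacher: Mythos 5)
Your first three steps --- passing to the product of simple factors, invoking Chevalley's theorem factor by factor so that the quadratic $W$-invariants are spanned by the Killing polynomials $p_1,\dots,p_k$, and extracting from $h^{2,1}(M)\neq 0$ a relation $\sum_a \chi_a\,\omega_a=\sum_j\mu_j p_j$ to be analyzed via the non-degeneracy of $p_{j_0}$ --- are exactly the paper's strategy. The gap is in the final step, and it is not merely the ``principal obstacle'' you postponed: the statement you aim at there is false. From $\mathfrak{h}^0\cap\mathfrak{t}_{j_0}=0$ you cannot split off the \emph{single} factor $G_{j_0}$ as a complex Lie group. The invariant complex structure on $M$ restricts on the fibre of the Tits fibration to an arbitrary linear complex structure on $\mathfrak{t}/\mathfrak{h}$, which in general mixes $\mathfrak{t}_{j_0}$ with the toral directions of the other simple factors; hence $\mathfrak{g}_{j_0}=\mathfrak{t}_{j_0}\oplus\mathfrak{m}_{j_0}$ need not be $I$-invariant and $G_{j_0}$ ``inherits'' no complex structure --- indeed $G_{j_0}$ need not even be even-dimensional. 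The Calabi--Eckmann structure on $SU(2)\times SU(2)=S^3\times S^3$ (with $H=\{e\}$) is a counterexample: $h^{2,1}\neq 0$ and the relevant $\mu_j$ are nonzero, yet $S^3$ carries no complex structure and the manifold is not a holomorphic product of its two factors; the paper points this out explicitly in the remark closing the section. The correct conclusion, which the paper proves, is that the product of \emph{all} the simple factors $G_j$ with $\mu_j\neq 0$ splits off as the even-dimensional Lie group factor.

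To repair the argument you need the ingredient your proposal omits: a reason why the complex structure respects the splitting. The paper gets this from the Tanr\'e cocycle itself --- the vertical $(1,0)$-forms $\alpha_i$ with $\bar\partial(\sum_i\alpha_i\wedge\omega_i)=Q$ have real and imaginary parts lying in $\bigoplus_{\mu_j\neq 0}\mathfrak{t}_j^*$, and since a $(1,0)$-form satisfies $\alpha=\beta-iI^*\beta$ with $\beta=\mathrm{Re}\,\alpha$, this forces $\bigoplus_{\mu_j\neq 0}\mathfrak{t}_j$ (and hence $\bigoplus_{\mu_j\neq 0}\mathfrak{g}_j$) to be $I$-invariant. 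Only then does the torus-bundle restriction over $\prod_{\mu_j\neq 0}G_j/T_j$ give a holomorphic product decomposition up to finite cover. Two secondary slips: projecting the relation onto $\mathrm{Sym}^2(\mathfrak{t}_{j_0}^*)$ and using non-degeneracy shows that the $j_0$-components of the $\chi_a$ \emph{together with} those of the $\omega_a$ span $\mathfrak{t}_{j_0}^*$ (the $\omega_a$ are arbitrary $(1,1)$-classes of the base, not characteristic classes), so spanning by the $\chi_a^{(j_0)}$ alone needs an extra word; and $\mathfrak{h}^0\cap\mathfrak{t}_{j_0}=0$ makes the projection of $H^0$ to $\prod_{i\neq j_0}G_i$ an immersion (its kernel being $\mathfrak{h}^0\cap\mathfrak{g}_{j_0}$), not the projection to $G_{j_0}$ as you wrote.
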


\begin{proof} As in the previous proof, the assumption leads to  the existence of a quadratic relation $Q$ on the characteristic classes of the Tits fibration $\pi\colon G/H \rightarrow G/K$. This time the base $G/K$  is a generalized flag manifold   with $G$ a  semisimple Lie group, so by \cite[Corollary, p. 1148]{Borel} it is a product of generalized flag manifolds $G_1/K_1\times..\times G_k/K_k$ with $G_i$ simple.
Now we identify  the elements of $H^4(G/K, {\mathbb C})$ with the $W_K$-invariant quadratic polynomials in ${\mathbb C}[x^{(1)}_1,..,x^{(1)}_{s_1},  \ldots, x^{(k)}_1,..,x^{(k)}_{s_k}]$,  where $x^{(i)}_1,..,x^{(i)}_{s_i}$ correspond to  the generators of $H^{1,1}(G_i/T_i, {\mathbb C})$ and $T_i$ is the maximal tori in $K_i$. This follows because the maximal torus of $K$ is the maximal torus of $G$ and it  is the product $T = T_1\times \cdots \times T_k$ of maximal tori $T_i$ of $K_i$, which are also maximal in $G_i$. Then the polynomial corresponding to $Q$ is of the type $Q =c_1p_1+...+c_k p_k$ for  some constants $c_i$,  where $p_i$ is the unique (up to a constant) quadratic $W_{K_i}$-invariant polynomial depending on the variables corresponding to the classes of $H^{1,1}(G_i/T_i,{\mathbb C})$. Suppose now that $c_1\neq 0, c_2= \ldots =c_k=0$. Then $Q$ is a function of only  the variables $x^{(1)}_1, \ldots,x^{(1)}_{s_1}$ and has a maximal rank.  As a consequence  the Tits fibration has as fiber the torus  $T_1$ and characteristic classes only in $H^{1,1}(G_1/T_1, {\mathbb C})$ so $K_1$ is abelian and since $T_1$ is maximal abelian,  $K_1 = T_1$. Moreover as in the previous Lemma, we have $(1,0)$-forms $\alpha_1,\alpha_2,...,\alpha_{s_1}$ such that $\overline{\partial}(\sum \alpha_i\wedge \omega^1_i) = Q(\omega^1_1,...,\omega^1_{s_1})$. In particular the real and imaginary parts of $\alpha^1_i$ are all in $\mathfrak{t}_1^*$,  where $\mathfrak{t}_1$ is the Lie algebra of $T_1$. The the restriction of the Tits bundle over $G_1/K_1=G_1/T_1$ is (up to a finite cover) $G_1\rightarrow G_1/T_1$ and the tangent bundle of $G_1$ is complex-invariant, so $M$ is biholomorphic (up to a finite cover) to  the product $G_1\times (G_2\times...\times G_k)/\overline{H}$,  where $\overline{H}$ contains $K_2\times...\times K_k$.

Suppose now that more than one of $c_i$'s is non-zero. Then $Q$ depends only on the variables, corresponding to the nonvanishing $c_i$ and has maximal rank in them. So again, using the same considerations as above, $M$  is up to a finite cover the product of $G_i$'s for these $i$ and some Wang C-space invariant under the action of the product of $G_j$'s corresponding to the $c_j$'s with $c_j=0$.
\end{proof}

\begin{proof}[Proof of Theorem $\ref{SKTWangCspaces}$]
Every Wang C-space $M$ is represented as $M=G/H$ for a compact semisimple Lie group $G$. Now assume that $M$ admits an SKT metric but it  has no K\"ahler metrics. From a result by Borel \cite{Borel} (see also \cite{ZB}, Theorem 5.8 ) $M$ does not admit an invariant symplectic form. By averaging, $M$ does not admit any symplectc structure. Then by  Cavalcanti's result \cite{Cavalcanti}
about the non-existence of SKT and non-K\"ahler metrics on a compact complex manifold  and Lemma \ref{h30-vanish}  we have $h^{2,1}(M) \neq 0$.
By Lemma \ref{h21-vanish}, $M$ is biholomorphic (up to a finite cover) to a product $G_1\times M_1$, where $M_1$ is another Wang C-space. We can embed $M_1$ into (possible finite cover of ) $M$ as a complex submanifold and note that a restriction of an SKT metric to a complex submanifold is again SKT, we obtain an SKT metric on $M_1$. If $M_1$ admits no K\"ahler metric again, we can continue until we get a factor which a C-space, but  it admits a K\"ahler metric, or is empty.

\end{proof}

\begin{remark}{\em
A product of  a K\"ahler  space and an SKT space is SKT. Since the generalized flag manifolds admit K\"ahler structures (and  they are the only homogeneous manifolds of compact semi-simple Lie groups which do, by \cite{Borel}), the spaces of Theorem 6.1 admit SKT metrics.}

{\em Note also that on the product of $G_i$ for the $i$ with $c_i\neq 0$ in the proof, the complex structure does not have to be a product of complex structures  on $G_i$. This is the case of $S^3\times S^3$ for example.}
\end{remark}

\smallskip

{\bf Acknowledgments.} The work on this project started with the second named author's visit to the University of  Torino. He is grateful to Mathematics Department there, as well as Max Plank Institute (Bonn) and Institute of Mathematics and Informatics of the Bulgarian Academy of Sciences for the hospitality and support during the preparation of parts of this paper.  We would like to   thank Valentino  Tosatti for suggesting the  problem  of the existence of  an astheno-Kahler metric on a compact balanced non-K\"ahler  complex manifold. Moreover,  we are grateful to    Liana David and Luis Ugarte for useful comments on the paper.
We would like also to thank the anonymous referees for useful
comments.

 \end{document}